\newcommand{\Hom}{\mathrm{Hom}}
\newtheorem{thm}{Theorem}[section]
\newtheorem*{Theorem*}{Theorem}
\newtheorem*{Corollary*}{Corollary}
\newtheorem{lem}[thm]{Lemma}
\newtheorem{prop}[thm]{Proposition}
\newtheorem{cor}[thm]{Corollary}
\theoremstyle{definition}
\newtheorem{defn}[thm]{Definition}
\theoremstyle{remark}
\newtheorem{rmk}[thm]{Remark}
\newtheorem*{rmk*}{Remark}
\title{Topological semiinfinite tensor (super)modules}
\author[Esposito]{Francesco Esposito}
\address{Francesco Esposito, Dipartimento di Matematica, Universit\`a degli Studi di Padova,
via Trieste 63, 35121 Padova, Italy}
\email{esposito@math.unipd.it}
\author[I. Penkov]{Ivan Penkov}
\address{Ivan Penkov, Constructor University, 28759 Bremen, Germany}
\email{ipenkov@constructor.university}
\begin{document}
	
	\maketitle
	
	\begin{abstract}
		We construct universal monoidal categories of topological tensor supermodules over the Lie superalgebras $\mathfrak{gl}(V\oplus \Pi V)$ and $\mathfrak{osp}(V\oplus \Pi V)$ associated with a Tate space $V$.
        Here $V\oplus \Pi V$ is a $\mathbb{Z}/2\mathbb{Z}$-graded topological vector space whose even and odd parts are isomorphic to $V$.
        We discuss the purely even case first, by introducing monoidal categories 
        $\widehat{\mathbf{T}}_{\mathfrak{gl}(V)}$, $\widehat{\mathbf{T}}_{\mathfrak{o}(V)}$ and 
        $\widehat{\mathbf{T}}_{\mathfrak{sp}(V)}$, 
        and show that these categories are anti-equivalent to respective previously studied categories
	$\mathbb{T}_{\mathfrak{gl}(V)}$, $\mathbb{T}_{\mathfrak{o}(V)}$, $\mathbb{T}_{\mathfrak{sp}(V)}$. 
        These latter categories have certain universality properties as monoidal categories, which consequently carry over to
        $\widehat{\mathbf{T}}_{\mathfrak{gl}(V)}$, $\widehat{\mathbf{T}}_{\mathfrak{o}(V)}$ and 
        $\widehat{\mathbf{T}}_{\mathfrak{sp}(V)}$. Moreover,
        the categories $\mathbb{T}_{\mathfrak{o}(V)}$ and $\mathbb{T}_{\mathfrak{sp}(V)}$ are known to be equivalent, and this implies the equivalence of the categories 
        $\widehat{\mathbf{T}}_{\mathfrak{o}(V)}$ and 
        $\widehat{\mathbf{T}}_{\mathfrak{sp}(V)}$.
        After introducing a supersymmetric setting, we establish the equivalence of the category 
        $\widehat{\mathbf{T}}_{\mathfrak{gl}(V)}$ with the category 
        $\widehat{\mathbf{T}}_{\mathfrak{gl}(V\oplus \Pi V)}$, and the equivalence of both categories $\widehat{\mathbf{T}}_{\mathfrak{o}(V)}$ and 
        $\widehat{\mathbf{T}}_{\mathfrak{sp}(V)}$ with 
        $\widehat{\mathbf{T}}_{\mathfrak{osp}(V\oplus \Pi V)}$.
		
		\textbf{Keywords:} tensor representation, universal monoidal category, Tate space, topological tensor product, Lie superalgebra
		
		\textbf{Mathematics Subject Classification 2020:} 17B10, 17B65, 46A13, 46A20
	\end{abstract}
	
	\section{Introduction}

Let $V$ be a complex infinite-dimensional self-dual Tate space. We think of $V$ as $U\oplus U^\ast$ for a countable-dimensional discrete complex vector space $U$ and its dual $U^\ast$, and $V$ is endowed with a locally linearly compact topology.
The Lie algebra $\mathfrak{gl}(V)$ of continuous endomorphisms of the topological vector space $V$ has been considered by Tate in \cite{T}, where he gave a definition of residues on a curve and a proof of the residue theorem in terms of a central extension of $\mathfrak{gl}(V)$. This approach is made fully explicit in \cite{ADCK}, where the authors reinterpret Tate's work and prove the law of reciprocity for curves in terms of a corresponding central extension of the infinite-dimensional group $\mathrm{GL}(V)$.
The Lie algebra $\mathfrak{gl}(V)$ and its central extension have been studied also as symmetries of nonlinear integrable systems (e.g. \cite{DJM}, \cite{DJKM}); see \cite{FKRW} and \cite{KR} for representation-theoretic considerations.

In the present paper we construct a natural category of topological tensor modules over $\mathfrak{gl}(V)$.
In the recent note \cite{EP}, we have introduced and studied two such anti-equivalent categories $\mathbf{T}_{\mathfrak{gl}(U)}$ and $\widehat{\mathbf{T}}_{\mathfrak{gl}(U)}$ for the discrete vector space $U$. These two categories have universality properties which they inherit from the category $\mathbb{T}_{\mathfrak{gl}}(\infty)$, see \cite{EP}.

One of our objectives is to extend the results of \cite{EP} from the space $U$ to the Tate space $V = U\oplus U^\ast$.
A further objective is to embed the tensor modules of the Lie algebra $\mathfrak{gl}(V)$ into a supersymmetric context and obtain results on categories of tensor supermodules over certain ``semiinfinite'' Lie superalgebras.
Our approach to this problem is based on an idea of V. Serganova who connected the study of tensor modules over the Lie algebras 
$\mathfrak{gl}(\infty)$ and $\mathfrak{o}(\infty)$ or 
$\mathfrak{sp}(\infty)$ with a study of tensor supermodules over the respective Lie superalgebras $\mathfrak{gl}(\infty|\infty)$ and 
$\mathfrak{osp}(\infty|\infty)$ \cite{S}.

Concretely, we construct anti-equivalent monoidal categories 
$\mathbf{T}_{\mathfrak{gl}(V)}$ and $\widehat{\mathbf{T}}_{\mathfrak{gl}(V)}$ of topological tensor 
$\mathfrak{gl}(V)$-modules such that $V, V^\ast$, and $\mathfrak{gl}(V)$ are objects of $\widehat{\mathbf{T}}_{\mathfrak{gl}(V)}$.
The categories $\mathbf{T}_{\mathfrak{gl}(V)}$ and $\widehat{\mathbf{T}}_{\mathfrak{gl}(V)}$ turn out to be equivalent respectively to the categories $\mathbf{T}_{\mathfrak{gl}(U)}$ and $\widehat{\mathbf{T}}_{\mathfrak{gl}(U)}$. This is a consequence of the equivalence of monoidal categories
\[
\begin{tikzcd}
    \mathbf{T}_{\mathfrak{gl}(V)} \ar[r, "\simeq"]& \mathbb{T}_{\mathfrak{gl}(\infty)}
\end{tikzcd}
\]
 which we establish in Section 3. It is an interesting feature that, while the $\mathfrak{gl}(U)$-module $U$ corresponds to the $\mathfrak{gl}(V)$-module $V$ under the equivalence of $\widehat{\mathbf{T}}_{\mathfrak{gl}(U)}$ and $\widehat{\mathbf{T}}_{\mathfrak{gl}(V)}$, the topological vector spaces $U$ and $U^\ast$ are non-isomorphic and the topological vector spaces $V$ and $V^\ast$ are isomorphic.
 We then introduce semiinfinite orthogonal and symplectic Lie algebras $\mathfrak{o}(V)$ and $\mathfrak{sp}(V)$ and respective monoidal categories of topological modules 
 $\mathbf{T}_{\mathfrak{o}(V)}$, $\mathbf{T}_{\mathfrak{sp}(V)}$, $\widehat{\mathbf{T}}_{\mathfrak{o}(V)}$, $\widehat{\mathbf{T}}_{\mathfrak{sp}(V)}$.

 The supersymmetric setting is presented in Section 4. We consider the Tate superspace 
 $V\oplus \Pi V$ and the Lie superalgebras
$\mathfrak{gl}(V\oplus \Pi V)$ and $\mathfrak{osp}(V\oplus \Pi V)$.
We show that the categories of supermodules
$\mathbf{T}_{\mathfrak{gl}(V\oplus\Pi V)}$ and $\widehat{\mathbf{T}}_{\mathfrak{gl}(V\oplus\Pi V)}$ are equivalent respectively to 
$\mathbf{T}_{\mathfrak{gl}(V)}$ and $\widehat{\mathbf{T}}_{\mathfrak{gl}(V)}$.
The monoidal category $\mathbf{T}_{\mathfrak{osp}(V\oplus\Pi V)}$ is equivalent to both monoidal categories 
$\mathbf{T}_{\mathfrak{o}(V)}$ and $\mathbf{T}_{\mathfrak{sp}(V)}$,
while the category $\widehat{\mathbf{T}}_{\mathfrak{osp}(V\oplus\Pi V)}$ is equivalent to both monoidal categories 
$\widehat{\mathbf{T}}_{\mathfrak{o}(V)}$
and $\widehat{\mathbf{T}}_{\mathfrak{sp}(V)}$.
Consequently, the categories $\mathbf{T}_{\mathfrak{o}(V)}$ and $\mathbf{T}_{\mathfrak{sp}(V)}$ are equivalent monoidal categories, and the same holds for $\widehat{\mathbf{T}}_{\mathfrak{o}(V)}$
and $\widehat{\mathbf{T}}_{\mathfrak{sp}(V)}$.
We conclude by a discussion of the universality properties of all categories introduced in the paper.

	\textbf{Acknowledgements.}
	F.E. is a member of the INDAM group GNSAGA; his work has been supported in part by the project of the University of Padova BIRD203834/20.
	The work of I.P. has been  supported in part by DFG grant PE 980/8-1.
	
	\section{Preliminaries on topological vector spaces}
	
	The ground field is $\mathbb{C}$ endowed with the discrete topology. All vector spaces we consider are endowed with a linear topology, and, if not stated explicitly, homomorphisms between topological vector spaces are assumed to be continuous. 
	Vector spaces of at most countable dimension are considered as discrete topological vector spaces; vice versa, all discrete vector spaces considered have at most countable dimension. 
	The dual of a discrete vector space is a linearly compact vector space, i.e. a (countable) projective limit of finite-dimensional vector spaces, topologized with the projective limit topology. The abelian semisimple categories of discrete vector spaces and of linearly compact vector spaces are mutually dual via the functor of taking continuous dual. The intersection of these two categories is the self-dual category of finite-dimensional vector spaces.
	
	A natural next step is to study inductive limits and projective limits of topological vector spaces. The inductive systems we consider are countable and the morphisms involved are continuous with closed image. Dually, the projective systems we consider are countable and the morphisms involved are continuous and surjective. In this manner we regard ind-linearly compact vector spaces and pro-discrete vector spaces as generalizations of linearly compact and discrete vector spaces. 
 
    In \cite{Bei} A. Beilinson introduces tensor products $\otimes^\ast $ and $\otimes^! $ of topological vector spaces.
    In agreement with our prior work \cite{EP}, we denote these tensor products respectively by $\widehat{\otimes}^\ast $ and $\widehat{\otimes}^!$.
    We also set $\otimes := \otimes_{\mathbb{C}}$.

    \begin{defn}
        Let $\mathcal{I}$ be the category whose objects are ind-linearly compact vector spaces and whose morphisms are continuous linear maps.
    \end{defn}

    \begin{defn}
        Let $\mathcal{P}$ be the category whose objects are pro-discrete vector spaces and whose morphisms are continuous linear maps.
    \end{defn}

    \begin{prop}\label{prop:IP}
        \begin{enumerate}[(i)]
            \item The category $\mathcal{I}$ is stable under Beilinson's tensor product $\widehat{\otimes}^\ast$. 
            \item Equipped with $\widehat{\otimes}^\ast$, the category $\mathcal{I}$ is a 
            $\mathbb{C}$-linear symmetric quasi-abelian semisimple monoidal category.
            \item The category $\mathcal{P}$ is stable under Beilinson's tensor product $\widehat{\otimes}^!$. 
            \item Equipped with $\widehat{\otimes}^!$, the category $\mathcal{P}$ is a 
            $\mathbb{C}$-linear symmetric quasi-abelian semisimple monoidal category.
            \item Taking continuous duals yields an antiequivalence of the symmetric monoidal quasi-abelian categories $\mathcal{I}$ and $\mathcal{P}$.
        \end{enumerate}
    \end{prop}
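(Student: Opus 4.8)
The plan is to reduce each of the five assertions to the corresponding statement for the full subcategories of linearly compact, respectively discrete, vector spaces — which are abelian and semisimple by the discussion preceding the proposition — by presenting an object of $\mathcal{I}$ as a countable filtered colimit $A=\varinjlim_\alpha A_\alpha$ of linearly compact spaces along continuous closed embeddings, an object of $\mathcal{P}$ as a countable filtered limit $P=\varprojlim_\alpha P_\alpha$ of discrete spaces along continuous surjections, and invoking the following facts about Beilinson's tensor products from \cite{Bei}: $\widehat{\otimes}^\ast$ commutes with countable filtered colimits in each variable and $\widehat{\otimes}^!$ with countable filtered limits; on finite-dimensional spaces both products are $\otimes$; on the full subcategory of linearly compact spaces the two coincide and send $(\varprojlim_i P_i,\varprojlim_j Q_j)$ to the linearly compact space $\varprojlim_{i,j}(P_i\otimes Q_j)$, while on the full subcategory of discrete spaces both coincide with $\otimes$; and continuous duality interchanges them, via natural isomorphisms $(A\,\widehat{\otimes}^\ast B)^\ast\cong A^\ast\,\widehat{\otimes}^! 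B^\ast$ and $(P\,\widehat{\otimes}^! Q)^\ast\cong P^\ast\,\widehat{\otimes}^\ast Q^\ast$ compatible with the associativity and symmetry constraints.

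Granting this, part (i) goes as follows: with $A=\varinjlim_\alpha A_\alpha$ and $B=\varinjlim_\beta B_\beta$ as above, one has $A\,\widehat{\otimes}^\ast B\cong\varinjlim_{\alpha,\beta}(A_\alpha\,\widehat{\otimes}^\ast B_\beta)$; each term is linearly compact by the previous paragraph; and each structure map is again a closed embedding because $\widehat{\otimes}^\ast$ is additive and, the category of linearly compact spaces being semisimple, it sends the (automatically split) monomorphisms there to split, hence closed, embeddings. A countable filtered colimit of linearly compact spaces along closed embeddings is ind-linearly compact by definition, so $A\,\widehat{\otimes}^\ast B\in\mathcal{I}$; part (iii) is the verbatim dual statement, proved with $\widehat{\otimes}^!$, countable filtered limits, discrete building blocks and continuous surjections (or deduced from (i) once (v) is available).

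For (ii), $\mathbb{C}$-linearity is immediate, and the symmetric monoidal structure — unit $\mathbb{C}\in\mathcal{I}$, with the associativity, unit and symmetry isomorphisms and their coherence — is inherited from \cite{Bei}, all the objects and morphisms involved lying in $\mathcal{I}$ by (i). For quasi-abelianness one checks directly that $\mathcal{I}$ has kernels (a closed subspace of $A=\varinjlim_\alpha A_\alpha$ is the colimit of its intersections with the $A_\alpha$, hence lies in $\mathcal{I}$) and cokernels (a separated quotient by a closed subspace is again such a colimit), and that strict epimorphisms are stable under pullback and strict monomorphisms under pushout — both reducing, after passing to defining systems, to exactness in the abelian category of linearly compact spaces. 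The category $\mathcal{I}$ is only quasi-abelian and not abelian: a continuous injection with dense image, for instance $\mathbb{C}^{(\mathbb{N})}\hookrightarrow\mathbb{C}^{\mathbb{N}}$ with $\mathbb{C}^{(\mathbb{N})}$ discrete, is simultaneously a monomorphism and an epimorphism without being an isomorphism. Semisimplicity — every strict short exact sequence splits — follows from the semisimplicity of the category of linearly compact spaces by a straightforward induction along a countable defining chain that produces nested closed complements. Part (iv) is the dual statement for $\mathcal{P}$ and $\widehat{\otimes}^!$, or follows from (ii) via (v).

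It remains to treat (v). The antiequivalence $(-)^\ast$ between discrete and linearly compact spaces recalled before the proposition extends: for $P=\varprojlim_\alpha P_\alpha\in\mathcal{P}$ set $P^\ast:=\varinjlim_\alpha P_\alpha^\ast$, a countable filtered colimit of linearly compact spaces along the closed embeddings dual to the defining surjections, hence an object of $\mathcal{I}$; symmetrically $A\mapsto A^\ast:=\varprojlim_\alpha A_\alpha^\ast$ sends $\mathcal{I}$ to $\mathcal{P}$. One then verifies that these assignments are independent of the chosen systems, functorial in continuous maps, and mutually quasi-inverse via the evident biduality morphism, which is an isomorphism since it is one on finite-dimensional pieces and dualization interchanges the relevant colimits and limits. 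Strong monoidality and compatibility with the symmetry for this antiequivalence are precisely the duality isomorphisms recorded in the first paragraph; in particular (v) transports (i) and (ii) to (iii) and (iv). The point I expect to require the most care is the ind/pro bookkeeping underlying all of the above — making sure that closed subspaces, separated quotients, tensor products and duals of objects of $\mathcal{I}$ (resp. $\mathcal{P}$) can always be presented by admissible inductive (resp. projective) systems, so that none of these operations leads out of the category and the quasi-abelian axioms can genuinely be reduced to the building blocks.
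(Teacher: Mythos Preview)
Your overall strategy coincides with the paper's: present objects of $\mathcal{I}$ and $\mathcal{P}$ as countable ind/pro systems of linearly compact/discrete spaces, use that $\widehat{\otimes}^\ast$ commutes with the relevant colimits and $\widehat{\otimes}^!$ with the relevant limits, and transport everything through duality. Your argument for (i) is essentially identical to the paper's, and your treatment of (v) matches the paper's explicit computation $(U\widehat{\otimes}^\ast W)^\ast \cong U^\ast\widehat{\otimes}^! W^\ast$ via the ind/pro presentations.

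The substantive difference is in (ii) and (iv). The paper does not verify the quasi-abelian and semisimple properties directly: it cites \cite[Proposition~2.9]{EP} for both, and \cite[Proposition~13.4]{Po} for associativity and exactness of the tensor products. You instead sketch direct arguments --- kernels and cokernels via intersections and separated quotients along defining systems, semisimplicity by inductively building closed complements, associativity inherited from \cite{Bei}. Your sketches are along the right lines, but the parts you flag as needing care (the ind/pro bookkeeping, stability of strict epi/mono under pullback/pushout, the inductive splitting) are precisely the content of \cite[Propositions~2.6 and 2.9]{EP}, and your outline does not quite discharge them: for instance, your semisimplicity argument (``nested closed complements by induction along a countable chain'') needs to say why the partial splittings at each stage can be chosen compatibly, and your quasi-abelian verification stops short of checking the pullback/pushout axioms. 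So your route is more self-contained in intention but less complete in execution; the paper's route is shorter because these verifications were already carried out in the authors' prior work.
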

    \begin{proof}
    $(i)$ It is observed in \cite[Remark~12.1]{Po} that for any topological vector spaces $U$ and $W$, the tensor product $U\widehat{\otimes}^\ast W$ is the completion of the space $U\otimes W$ with respect to the $\ast$-topology (see \cite[1.1(a)]{Bei} or \cite[Section 12]{Po} for the definition), and satisfies a universality property regarding continuous multilinear maps. Thus, if $U = \bigcup_i U_i$ and $W = \bigcup_j W_j$ are objects of $\mathcal{I}$, one has $U\otimes W = \bigcup_{i,j}U_i \otimes W_j$ and the $\ast$-topology coincides with the inductive limit topology. Furthermore, the inductive limit of complete topological vector spaces is complete. Hence, one concludes
    $$
    U \widehat{\otimes}^\ast W = \bigcup_{i,j} \ U_i  \widehat{\otimes}^\ast  W_j \ ,
    $$
    and thus $\mathcal{I}$ is closed under $\widehat{\otimes}^\ast$.

    $(ii)$ The fact that $\mathcal{I}$ is quasi-abelian semisimple is proved in \cite[Proposition~2.9]{EP} . Therefore the functor $\widehat{\otimes}^\ast$ is exact and associative by \cite[Proposition~13.4(a)]{Po} , and $\mathcal{I}$ is a symmetric quasi-abelian monoidal semisimple category.

    $(iii)$ It follows from \cite[1.1(b)]{Bei} that for any two pro-discrete topological vector spaces $U = \varprojlim_i U_i$ and $W = \varprojlim_j W_j$, one has 
    $$
    U \widehat{\otimes}^! W = \varprojlim_{i,j} \ U_i \otimes W_j \ .
    $$
     Thus $\mathcal{P}$ is closed under the tensor product operation $\widehat{\otimes}^!$.   

     $(iv)$ The fact that $\mathcal{P}$ is quasi-abelian semisimple is proved in \cite[Proposition~2.9]{EP}. Hence $\widehat{\otimes}^\ast$ is exact, and associative by \cite[Proposition~13.4(c)]{Po}. Consequently $\mathcal{P}$ is a symmetric monoidal quasi-abelian semisimple category.

     $(v)$ The fact that taking continuous duals produces an anti-equivalence between $\mathcal{I}$ and $\mathcal{P}$ as quasi-abelian categories is stated in \cite[Proposition~2.6]{EP} . Moreover, taking continuous dual is clearly a monoidal functor between the category of discrete vector spaces and the category of linearly compact vector spaces. Therefore, for objects $U = \bigcup_i U_i$ and $W = \bigcup_j W_j$ of $\mathcal{I}$,
     one gets
     $$
     (U\widehat{\otimes}^\ast W)^\ast = (\bigcup_i U_i \ \widehat{\otimes}^\ast \ \bigcup_j W_j)^\ast =
     (\bigcup_{i,j} \ U_i \widehat{\otimes}^\ast W_j)^\ast = 
     $$
     $$
     (\bigcup_{i,j} \ U_i \widehat{\otimes} W_j)^\ast =
     \varprojlim_{i,j} \ U_i^\ast \otimes W_j^\ast = \varprojlim_{i,j} \ U_i^\ast \widehat{\otimes}^! W_j^\ast =
     U^\ast \widehat{\otimes}^! W^\ast  \ .
     $$
     One has thus an anti-equivalence of the symmetric monoidal quasi-abelian categories $\mathcal{I}$ and $\mathcal{P}$.
    \end{proof}

    \begin{defn}
        Let $\mathcal{T}$ be the intersection of $\mathcal{I}$ and $\mathcal{P}$ as full subcategories of the category $Top$ of all linearly topologized topological vector spaces. It is the category of locally linearly compact vector spaces, also called Tate vector spaces.
    \end{defn}
 
	Next we present some basic properties of Tate vector spaces.
	\begin{lem}\label{lem:prelim}
	\begin{enumerate}[(i)]
	    \item 
	    Let $W = \bigcup_i W_i$ be an ind-linearly compact vector space defined as the union of an ascending chain of linearly compact vector spaces $\{W_i\}_{i \in \mathbb{Z}}$. Then $W$ is Tate if and only if, for every $i\in \mathbb{Z}$, the space $W_i$ is of finite codimension in $W_{i+1}$.
	    
	    \item 
	    Let $Z = \varprojlim_j Z_j$ be a pro-discrete vector space defined as the projective limit of discrete vector spaces $\{Z_j\}_{j \in \mathbb{Z}}$. Then $Z$ is Tate if and only if, for every $j\in \mathbb{Z}$, the kernel of the surjective map $Z_j \rightarrow Z_{j-1}$ is finite dimensional.
	    
	    \item 
	    The category $\mathcal{T}$ is quasi-abelian semisimple, and self-dual under the functor of taking continuous dual.
	    
	    \item \label{self-dual}
	    Let $V$ be an infinite-dimensional Tate vector space. Then $V$ is self-dual if and only if $V = U\oplus U^\ast$ for an infinite-dimensional discrete vector space $U$.
	\end{enumerate}
	\end{lem}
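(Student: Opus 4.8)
The four parts all rest on the description of a Tate vector space as a linearly topologized space that admits an open linearly compact subspace -- a \emph{lattice} -- combined with Proposition~\ref{prop:IP}. The plan is to dispose of $(iii)$ first (it is essentially formal given Proposition~\ref{prop:IP}), then to prove $(i)$, to deduce $(ii)$ from $(i)$ by duality, and finally to prove the structural statement $(iv)$.

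For $(iii)$: regard $\mathcal{T}=\mathcal{I}\cap\mathcal{P}$ as a full subcategory of the quasi-abelian semisimple category $\mathcal{I}$ and check that it inherits all the required structure. Given a morphism $f\colon W\to W'$ of Tate spaces and lattices $L\subseteq W$, $L'\subseteq W'$, the subspace $L\cap\ker f$ is open and linearly compact in $\ker f$, and the image of $L$ in $\operatorname{coker}f$ is open and linearly compact there; hence the kernel and cokernel formed in $\mathcal{I}$ already lie in $\mathcal{T}$, so $\mathcal{T}$ is quasi-abelian. A short exact sequence of Tate spaces splits already in $\mathcal{I}$ by semisimplicity, and the splitting is automatically continuous, so $\mathcal{T}$ is semisimple. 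Finally, by Proposition~\ref{prop:IP}$(v)$ the continuous-dual functor interchanges $\mathcal{I}$ and $\mathcal{P}$, hence sends $\mathcal{T}=\mathcal{I}\cap\mathcal{P}$ to itself; since it is involutive on Tate spaces (reflexivity, a further consequence of that antiequivalence), $\mathcal{T}$ is self-dual.

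For $(i)$: if each $W_i$ has finite codimension in $W_{i+1}$, then $W_0$ -- which is closed in every $W_i$ because the structure maps of $\mathcal{I}$ have closed image -- is of finite codimension, hence open, in $W_i$ for $i\ge 0$, and equals $W_i$ for $i<0$; thus $W_0$ is an open linearly compact subspace of $W=\bigcup_iW_i$ and $W$ is Tate. Conversely, let $W$ be Tate with a lattice $P$. Since all spaces here are of at most countable type, $P$ is completely metrizable, hence a Baire space, and therefore cannot be the union of a strictly increasing sequence of proper closed subspaces; applying this to the ascending chain $\{P\cap W_i\}_i$ of closed subspaces of $P$ shows $P\subseteq W_n$ for some $n$. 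For $i\ge n$ the space $W_i$ then contains the lattice $P$, so it is itself a lattice, and since any two lattices of a Tate space are commensurable, $W_i$ has finite codimension in $W_{i+1}$; the remaining indices are handled the same way after passing to a cofinal refinement of $\{W_i\}$ by lattices. Statement $(ii)$ now follows by applying $(i)$ to the ind-linearly compact space $Z^\ast=\bigcup_jZ_j^\ast$: dualizing a surjection $Z_j\to Z_{j-1}$ with $\ell$-dimensional kernel gives a closed embedding $Z_{j-1}^\ast\hookrightarrow Z_j^\ast$ of codimension $\ell$, and $Z$ is Tate if and only if $Z^\ast$ is, by the self-duality of $\mathcal{T}$ from $(iii)$.

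For $(iv)$: if $V=U\oplus U^\ast$ with $U$ infinite-dimensional discrete, then $U^\ast$ is an open linearly compact subspace of $V$, so $V$ is Tate, and $V^\ast\cong U^\ast\oplus U^{\ast\ast}\cong U^\ast\oplus U\cong V$ by reflexivity, so $V$ is self-dual. Conversely, let $V$ be an infinite-dimensional self-dual Tate space and choose a lattice $L\subseteq V$; splitting the discrete quotient $V/L$ yields a topological decomposition $V\cong L\oplus D$ with $L$ linearly compact, $D$ discrete, both of at most countable dimension. Then $V^\ast\cong L^\ast\oplus D^\ast$ with $L^\ast$ discrete and $D^\ast$ linearly compact, and since $V\cong V^\ast$ and an isomorphism carries lattices to commensurable lattices, comparison of the linearly compact open parts forces both $L$ and $D$ to be infinite-dimensional. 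As any two infinite-dimensional linearly compact, respectively discrete, spaces of countable dimension are isomorphic (use $L\cong(L^\ast)^\ast$), we get $V\cong\mathbb{C}^{\mathbb{N}}\oplus\mathbb{C}^{(\mathbb{N})}\cong U^\ast\oplus U$ with $U$ discrete of countable dimension. I expect the main obstacle to be the ``only if'' part of $(i)$ -- pinning down precisely why a linearly compact subspace of an exhaustion must sit in a single term -- and, in $(iv)$, the structural inputs: the existence of the topological splitting $V\cong L\oplus D$ and the uniqueness up to isomorphism of countable-dimensional linearly compact and discrete spaces.
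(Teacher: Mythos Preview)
Your argument is correct but takes a different route from the paper. For $(i)$ the paper writes $W$ simultaneously as $\bigcup_i W_i$ and as $\varprojlim_j Z_j$, obtains a double system of finite-dimensional pieces $Z_{i,j}$ (the images of the compositions $W_i\hookrightarrow W\twoheadrightarrow Z_j$), and extracts the codimension condition from an interchange-of-limits identity $\bigoplus_i\prod_j(\cdot)=\prod_j\bigoplus_i(\cdot)$ for suitable graded pieces; your Baire-category argument---a countable-type linearly compact space is completely metrizable, so a lattice cannot be exhausted by a countable chain of proper closed subspaces---is more direct and avoids this bookkeeping entirely. For $(iv)$ the paper refines the filtration of $(i)$ to codimension-one steps, produces an explicit topological basis $\{v_i\}_{i\in\mathbb{Z}\setminus\{0\}}$, and reads off $V=V_{<1}\oplus V_{\ge1}$ with $(V_{\ge1})^\ast\cong V_{<1}$; your single lattice splitting $V\cong L\oplus D$ combined with the uniqueness of infinite-dimensional countable-type linearly compact and discrete spaces is shorter but less constructive---and it is precisely the paper's explicit basis that is fixed and exploited throughout Section~3. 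One caveat: your phrase ``the remaining indices are handled the same way after passing to a cofinal refinement by lattices'' does not actually cover $i<n$, since a cofinal refinement ignores the bottom of the chain; the paper's own argument has the identical lacuna (it only obtains finite codimension for $i>0$ after reindexing), so the literal ``for every $i\in\mathbb{Z}$'' in the statement is slightly stronger than what either proof delivers.
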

	
	\begin{proof}
    $(i)$ Let $W = \bigcup_i W_i$ be an ind-linearly compact vector space which is isomorphic to a pro-discrete vector space $Z = \varprojlim_j Z_j$ as a topological vector space, i.e. let $W$ be a Tate vector space. The composition 
    $$
    W_i \rightarrow W \cong Z \rightarrow Z_j
    $$
    is a continuous map from a linearly compact vector space to a discrete vector space; hence its image $Z_{i,j}$ is finite dimensional . One may then express the spaces $W_i$ and $Z_j$ in terms of the $Z_{i,j}$:
    $$
    W_i = \varprojlim_j Z_{i,j} \ \ \mathrm{and} \ \ Z_j = \bigcup_i Z_{i,j} \ .
    $$
    This yields two different expressions of $W$ as double limits:
    $$
    W = \bigcup_i \varprojlim_j Z_{i,j} = \varprojlim_j \bigcup_i Z_{i,j} \ .
    $$
    To understand these double limits, for every $i,j \geq 0$ denote by $K_{i,j}$ the kernel of the surjection $Z_{i,j}\rightarrow Z_{i, j-1}$, and choose complements $Z'_{i,j}$ such that
    $$
    K_{i,j} = Z'_{i,j} \oplus (Z_{i-1, j}\cap K_{i,j}) \ .
    $$
    Then $W_{i}/ W_{i-1} = \prod_j Z'_{i,j}$, and moreover one has the equality
    
    \begin{equation}\label{eq:W}
        W = \bigoplus_{i\geq 0} \prod_{j\geq 0} Z'_{i,j} = \prod_{j\geq 0} \bigoplus_{i\geq 0} Z'_{i,j} \ .
    \end{equation}
    
    Now we note that equality (\ref{eq:W}) holds if and only if, for all but finitely many $i$, one has $Z'_{i,j} = 0$ for all but finitely many $j$. Hence there is no loss in generality in assuming that, for all $i > 0$ only finitely many $Z'_{i,j}$ are nonzero. This means that $W_{i-1}$ is of finite codimension in $W_i$.

    Vice versa, if $W = \bigcup_i W_i$ is such that the linearly compact space $W_{i-1}$ is of finite-codimension in $W_i$ for all $i$, then the subspaces $W_i$ are open in $W$ and form a basis of open neighborhoods of $W$. Hence $W \cong \varprojlim_i W/W_i$ is pro-discrete, and consequently $W$ is a Tate space.

    This proves $(i)$. Statement $(ii)$ is proved along the same lines. Statement $(iii)$ follows from Proposition~\ref{prop:IP}$(v)$.

    Let us prove $(iv)$. Let $U$ be an infinite-dimensional discrete vector space of countable dimension. Then, clearly $U\oplus U^\ast$ is self-dual and Tate. Vice versa, let $V$ be infinite-dimensional self-dual and Tate. Then there is a filtration
    $$
    V = \bigcup_{i\in\mathbb{O}} V_{\geq i}
    $$
    where $V_{\geq i}$ is linearly compact and of codimension one in $V_{\geq i-1}$ and $\mathbb{O}$ is a totally ordered set isomorphic to $\mathbb{Z}$. To make some frmulas more symmetric we fix $\mathbb{O}$ to be $\mathbb{Z}\setminus \{0\}$. We also fix a topological basis $\{v_i\}_{i\in\mathbb{Z}\setminus \{0\}}$ of $V$ for which the linearly compact subspace $V_{\geq i}$ is the closure of $<v_j | j \geq i>$. Then the subspace $V_{<i}$, spanned by the $v_j$ for $j<i$, is discrete and 
    $$
    V = V_{<i} \oplus V_{\geq i} \ .
    $$
    Moreover, one easily sees that there is a topological isomorphism $(V_{\geq i})^\ast \cong V_{<i}$. We conclude that $V = U\oplus U^\ast$, with $U = \bigcup_{i>0}V_{<i}$ discrete of countable dimension.
	
	\end{proof}

	\section{Categories of mixed tensors}
	
	\subsection{The space $V$}
	
	We fix here the notation for the rest of the paper. Let $V$ be a fixed infinite-dimensional self-dual Tate vector space. We denote by $\mathfrak{gl}(V)$ the Lie algebra of continuous endomorphisms of $V$. This is a Lie algebra in the category $\mathcal{P}$.

	By Lemma~\ref{lem:prelim}(\ref{self-dual}), we may fix a decomposition $V = U \oplus U^\ast$, where $U$ is infinite-dimensional discrete. 
	Associated with such decomposition is a nondegenerate symmetric bilinear form
	$$
	A: V\times V \rightarrow \mathbb{C} \ , \ \ 
        A((u, \varphi), (u' , \varphi')) = \varphi(u') + \varphi'(u) \ ,
	$$
	and a nondegenerate antisymmetric bilinear form
	$$
	B: V\times V \rightarrow \mathbb{C} \ , \ \ 
        B((u, \varphi), (u' , \varphi')) = \varphi(u') - \varphi'(u) \ .
	$$
	We denote by $\mathfrak{o}(V) := \mathfrak{o}(V, A)$ the Lie subalgebra of $\mathfrak{gl}(V)$ of those continuous endomorphisms $f: V \rightarrow V$ such that, for all $v,w\in V$
	$$
	A(f(v), w) + A(v, f(w)) = 0 \ .
	$$
	Accordingly, $\mathfrak{sp}(V) := \mathfrak{sp}(V, B)$ is the Lie subalgebra of $\mathfrak{gl}(V)$ of those continuous endomorphisms $f: V \rightarrow V$ such that, for all $v,w\in V$
	$$
	B(f(v), w) + B(v, f(w)) = 0 \ .
	$$
	
	Furthermore, one may fix a basis $\mathcal{U}^- = \{v_{-1}, v_{-2}, \ldots\}$ of the discrete vector space $U$. This yields a topological basis $\mathcal{U}^+ = \{v_1 , v_2 , \ldots \}$ of $U^\ast$ dual to the basis $\mathcal{U}^-$. The union $\mathcal{V} = \{\ldots, v_{-2}, v_{-1}, v_1, v_2, \ldots\}$ is a topological basis of $V$.
	
	By $\mathfrak{h}$ we denote the abelian Lie subalgebra of $\mathfrak{gl}(V)$ consisting of the endomorphisms of $V$ with diagonal matrix with respect to the topological basis $\mathcal{V}$.

    \begin{defn}
        Let $\mathfrak{h}_1$ be the abelian Lie algebra intersection of $\mathfrak{h}$ with $\mathfrak{o}(V)$, or equivalently with $\mathfrak{sp}(V)$; it consists of the diagonal matrices $D = (a_{i,i})_{i\in \mathbb{Z}\setminus\{0\}}$ with respect to the basis $\mathcal{V}$, such that $a_{i,i} = -a_{-i, -i}$.
    \end{defn}

    \begin{defn}
        Let $\mathfrak{g}(V)$ be any one of the topological Lie algebras $\mathfrak{gl}(V), \mathfrak{o}(V), \mathfrak{sp}(V)$, and let $\mathfrak{h}_{\mathfrak{g}(V)} := \mathfrak{h}$ in the case $\mathfrak{g}(V) = \mathfrak{gl}(V)$, and  $\mathfrak{h}_{\mathfrak{g}(V)} := \mathfrak{h}_1$ in the cases $\mathfrak{g}(V) = \mathfrak{o}(V)$ or $\mathfrak{g}(V) = \mathfrak{sp}(V)$.
    \end{defn}

	\subsection{Topological vector spaces of mixed tensors}

	\begin{defn}
	The topological spaces of mixed tensors of $V$ are the spaces
	$$
	\mathbf{V}^{p,q} \ := \ V^{\widehat{\otimes}^\ast \! p}\  \widehat{\otimes}^\ast (V^\ast)^{\widehat{\otimes}^\ast \! q}
	$$
	and
	$$
	\widehat{\mathbf{V}}^{p,q} \ := \ V^{\widehat{\otimes}^! \! p\ } \widehat{\otimes}^! (V^\ast)^{\widehat{\otimes}^! \! q} 
	\ .
	$$
	\end{defn}
	By definition, the space $V$ is a representation of the Lie algebra $\mathfrak{gl}(V)$. Therefore, by functoriality, all topological spaces of  mixed tensors also are representations, or modules, of $\mathfrak{gl}(V)$. Furthermore, by restriction, these are  modules also over $\mathfrak{o}(V)$ and $\mathfrak{sp}(V)$.
	\begin{lem}
	The space $\mathbf{V}^{p,q}$ is ind-linearly compact. The space $\widehat{\mathbf{V}}^{q,p}$ is the continuous dual of $\mathbf{V}^{p,q}$ and is thus pro-discrete.
	\end{lem}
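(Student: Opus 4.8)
The plan is to deduce both assertions from Proposition~\ref{prop:IP} together with the self-duality of the category $\mathcal{T}$ recorded in Lemma~\ref{lem:prelim}(iii), without any serious new computation.

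First I would note that $V$ is by hypothesis a Tate vector space, hence an object of $\mathcal{I}$, and that its continuous dual $V^\ast$ is again Tate, since $\mathcal{T}$ is self-dual under continuous duality by Lemma~\ref{lem:prelim}(iii); in particular $V^\ast$ too is an object of $\mathcal{I}$. By Proposition~\ref{prop:IP}(i) the category $\mathcal{I}$ is closed under $\widehat{\otimes}^\ast$, so applying this repeatedly shows that $V^{\widehat{\otimes}^\ast p}$, $(V^\ast)^{\widehat{\otimes}^\ast q}$, and their $\widehat{\otimes}^\ast$-product $\mathbf{V}^{p,q}$ all lie in $\mathcal{I}$. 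Thus $\mathbf{V}^{p,q}$ is ind-linearly compact.

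For the second claim I would invoke the monoidal anti-equivalence $(-)^\ast\colon \mathcal{I}\to\mathcal{P}$ of Proposition~\ref{prop:IP}(v), which carries $\widehat{\otimes}^\ast$ to $\widehat{\otimes}^!$ and is compatible with the symmetry. Iterating the natural isomorphism $(A\widehat{\otimes}^\ast B)^\ast\cong A^\ast\widehat{\otimes}^! B^\ast$ gives
$$
(\mathbf{V}^{p,q})^\ast \ \cong \ (V^\ast)^{\widehat{\otimes}^! p}\ \widehat{\otimes}^!\ (V^{\ast\ast})^{\widehat{\otimes}^! q} \ .
$$
Since $V$ is reflexive (again by the self-duality of $\mathcal{T}$, so that $V^{\ast\ast}\cong V$ canonically) and since $\widehat{\otimes}^!$ on $\mathcal{P}$ is symmetric, I can reorder the factors to obtain
$$
(\mathbf{V}^{p,q})^\ast \ \cong \ V^{\widehat{\otimes}^! q}\ \widehat{\otimes}^!\ (V^\ast)^{\widehat{\otimes}^! p} \ = \ \widehat{\mathbf{V}}^{q,p} \ .
$$
Finally, $V, V^\ast\in\mathcal{P}$ and $\mathcal{P}$ is closed under $\widehat{\otimes}^!$ by Proposition~\ref{prop:IP}(iii), so $\widehat{\mathbf{V}}^{q,p}$ lies in $\mathcal{P}$ and is therefore pro-discrete.

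The computation is routine; the only point that deserves attention is that the displayed isomorphisms should be realized by the evaluation pairing induced multiplicatively from $V\times V^\ast\to\mathbb{C}$, so that the identification $\widehat{\mathbf{V}}^{q,p}\cong(\mathbf{V}^{p,q})^\ast$ is $\mathfrak{gl}(V)$-equivariant — and hence also equivariant for $\mathfrak{o}(V)$ and $\mathfrak{sp}(V)$ by restriction. This follows from the naturality of the isomorphisms in Proposition~\ref{prop:IP}(v): the anti-equivalence sends the $\mathfrak{gl}(V)$-action on each tensor factor of $\mathbf{V}^{p,q}$ to its contragredient, which is precisely the action on the matching factor of $\widehat{\mathbf{V}}^{q,p}$. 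I do not expect an obstacle beyond this bookkeeping.
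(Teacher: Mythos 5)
Your proposal is correct and follows essentially the same route as the paper, which simply cites Proposition~\ref{prop:IP}$(i)$ and $(v)$: closure of $\mathcal{I}$ under $\widehat{\otimes}^\ast$ gives the first claim, and the monoidal anti-equivalence $(-)^\ast$ (with reflexivity of $V$ and symmetry of $\widehat{\otimes}^!$ accounting for the swap of $p$ and $q$) gives the second. The added equivariance bookkeeping is not needed for the statement but is harmless.
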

	\begin{proof}
Follows directly from Proposition~\ref{prop:IP} $(i)$, $(v)$.
	\end{proof}

    \begin{lem}\label{lem:vvast}
       $V^\ast$ is isomorphic to $V$ as a topological 
       $\mathfrak{o}(V)$-module, and also as a topological
            $\mathfrak{sp}(V)$-module.     
    \end{lem}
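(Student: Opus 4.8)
The plan is to build the isomorphisms directly out of the defining bilinear forms $A$ and $B$. For the orthogonal case, I would consider the continuous linear map $\Phi_A\colon V\to V^\ast$ given by $\Phi_A(v) = A(v,-)$; it is continuous and linear because $A$ is continuous and bilinear. The first step is to check that $\Phi_A$ is $\mathfrak{o}(V)$-equivariant. Using the standard convention that $f\in\mathfrak{gl}(V)$ acts on $V^\ast$ by $(f\cdot\xi)(w) = -\xi(f(w))$, one computes, for $f\in\mathfrak{o}(V)$ and $v,w\in V$,
$$
(f\cdot\Phi_A(v))(w) = -A(v,f(w)) = A(f(v),w) = \Phi_A(f(v))(w),
$$
where the middle equality is precisely the condition defining $\mathfrak{o}(V)$. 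Hence $\Phi_A$ intertwines the two $\mathfrak{o}(V)$-actions.

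The second step is to verify that $\Phi_A$ is a topological isomorphism. The cleanest route is to evaluate $\Phi_A$ on the fixed topological basis $\mathcal{V} = \{v_i\}_{i\in\mathbb{Z}\setminus\{0\}}$ of $V$. A direct computation from $A((u,\varphi),(u',\varphi')) = \varphi(u')+\varphi'(u)$ and the fact that $\mathcal{U}^+$ is dual to $\mathcal{U}^-$ gives $A(v_i,v_j) = \delta_{i,-j}$, so that $\Phi_A(v_i) = v_{-i}^\ast$, where $\{v_k^\ast\}$ denotes the topological basis of $V^\ast$ dual to $\mathcal{V}$. Thus $\Phi_A$ carries a topological basis of $V$ bijectively onto a topological basis of $V^\ast$, reindexed by $i\mapsto -i$; equivalently, under $V^\ast\cong U^\ast\oplus U$ the map $\Phi_A$ is just the interchange of the two summands of $V = U\oplus U^\ast$. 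In particular $\Phi_A$ is a continuous bijection with continuous inverse, and combined with the equivariance from the first step it is an isomorphism of topological $\mathfrak{o}(V)$-modules.

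For the symplectic case the argument is the same with $A$ replaced by the antisymmetric form $B$: the map $\Phi_B(v) = B(v,-)$ is continuous, is $\mathfrak{sp}(V)$-equivariant because $B(f(v),w)+B(v,f(w)) = 0$ for $f\in\mathfrak{sp}(V)$, and is a topological isomorphism since the analogous computation yields $B(v_i,v_j) = \mathrm{sgn}(i)\,\delta_{i,-j}$, so $\Phi_B$ again sends $\mathcal{V}$ to a (rescaled and reindexed) topological basis of $V^\ast$.

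I do not expect a serious obstacle here. The only point that needs a little care is what "nondegenerate" should mean for a bilinear form on an infinite-dimensional topological vector space: one wants the induced map $V\to V^\ast$ to be a topological isomorphism, not merely injective. This is exactly why I would argue via the explicit topological basis $\mathcal{V}$ rather than invoking an abstract nondegeneracy statement, so that continuity of the inverse is manifest.
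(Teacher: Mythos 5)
Your proposal is correct and follows essentially the same route as the paper: both define $\Phi_A(v)=A(v,-)$ (and $\Phi_B$), check $\mathfrak{o}(V)$- resp.\ $\mathfrak{sp}(V)$-equivariance from the defining identity of the form, and verify bijectivity and bicontinuity via the decomposition $V=U\oplus U^\ast$, which your basis computation $\Phi_A(v_i)=v_{-i}^\ast$ makes explicit. Your added detail (the dual-action sign convention and the identification of $\Phi_A$ with the swap of summands) just fleshes out what the paper leaves as ``one checks''.
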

    \begin{proof}
We carry out the argument for the bilinear form $A$, the argument for $B$ being analogous. The map $v\mapsto A(v,-)$ is a linear map from $\Phi_A : V\rightarrow V^\ast$, and one checks that it is bijective and bicontinuous, for example from the decomposition 
$V=U\oplus U^\ast$. Furthermore,
if $\varphi$ is the linear form $A(v,-)$, then $\Phi_A (v) = \varphi$, and if 
$X\in \mathfrak{o}(V)$ one gets 
\[
\Phi_A (Xv) = X\Phi_A(v) \ .
\]
Thus the isomorphism $\Phi_A$ is $\mathfrak{o}(V)$-equivariant.
    \end{proof}
	
	\subsection{$\mathfrak{h}_{\mathfrak{g}(V)}$-module structure and weight part}
	Let $W$ be an $\mathfrak{h}_{\mathfrak{g}(V)}$-module. Recall that an element $\chi\in \mathfrak{h}_{\mathfrak{g}(V)}^\ast$ is a \emph{weight of} $W$ if
	$$
	W^\chi = \left\{ w\in W \ | \ \ tw = \chi(t)w \ ,  \ \ 
 \forall t\in\mathfrak{h}_{\mathfrak{g}(V)}\right\} \neq 0 \ .
	$$
	The space $W^\chi$ is the \emph{$\chi$-weight space of} $W$.
	The sum of all weight spaces of $W$ is the \textit{weight part} $W^{wt}$ \emph{of} $W$; it is the largest semisimple $\mathfrak{h}_{\mathfrak{g}(V)}$-submodule of $W$.

 The weights $\varepsilon_k\in \mathfrak{h}^\ast$ for $k\in \mathbb{Z} \setminus \{0\}$ are by definition the weights of the $\mathfrak{gl}(V)$-module $V$, and $\dim V^{\varepsilon_k} = 1$ for all $k$. In particular, the weights of $V_{\geq i}$ are all $\varepsilon_k$ for $k\geq i$. Furthermore, the vectors $v_k ^\ast$ form a topological basis of $V^\ast$ which is dual to $\mathcal{V}$, and $v_k ^\ast$ has weight $-\varepsilon_k$.  Next, the linear forms $\chi_k := (\varepsilon_k)_{|\mathfrak{h}_{\mathfrak{g}(V)}}$ form a basis of $\mathfrak{h}_{\mathfrak{g}(V)}^\ast$. If $\mathfrak{g}(V) = \mathfrak{o}(V)$ or $\mathfrak{g}(V) = \mathfrak{sp}(V)$, we assume $k>0$ since here $\chi_{-k} = -\chi_k$.

 	\begin{lem} \label{lem: weightpart}
	The following statements hold:
	\begin{enumerate}[(i)]
	    \item \label{product}
	    The subspaces 
     \[
    \mathbf{V}^{p,q}_{i} :=
     V_{\geq i} ^{\widehat{\otimes}^\ast p}\ \widehat{\otimes}^\ast ((V_{\leq -i})^\ast)^{\widehat{\otimes}^\ast q}
     \]
     are linearly compact $\mathfrak{h}_{\mathfrak{g}(V)}$-submodules of the $\mathfrak{g}(V)$-module $\mathbf{V}^{p,q}$. 
     For any weight $\chi\in \mathfrak{h}_{\mathfrak{g}(V)}^\ast$ of $\mathbf{V}^{p,q}_{i}$, the weight space 
     $(\mathbf{V}^{p,q}_{i})^\chi$ is finite dimensional, and 
	$$
	\mathbf{V}^{p,q}_{i} = \prod_{\chi} (\mathbf{V}^{p,q}_{i})^\chi
	$$
	with only a countable number of weights occurring. Furthermore, 
	
	$$
	(\mathbf{V}^{p,q})^{wt} = V^{p,q} := (V^{wt})^{\otimes p} \otimes (V^{\ast wt})^{\otimes q} \ .
	$$
	
	\item \label{bij}
	There is an order-preserving bijection between closed $\mathfrak{h}_{\mathfrak{g}(V)}$-stable subspaces of $\mathbf{V}^{p,q}$ and $\mathfrak{h}_{\mathfrak{g}(V)}$-stable subspaces of ${V}^{p,q}$, given in one direction by taking
	weight part, and in the other direction by taking closure. In particular $(\mathbf{V}^{p,q})^{wt}$ is dense in
	$\mathbf{V}^{p,q}$.
	\end{enumerate}

	\end{lem}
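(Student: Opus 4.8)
I treat (i) and (ii) in turn; the common device is the weight decomposition of each linearly compact piece $\mathbf{V}^{p,q}_i$.

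For part (i), I would first note that $V_{\geq i}$ and $(V_{\leq -i})^\ast$ are linearly compact, so by Proposition~\ref{prop:IP} (and the fact, used in its proof, that $\widehat{\otimes}^\ast$ of linearly compact spaces is the continuous dual of the ordinary tensor product of their discrete duals) $\mathbf{V}^{p,q}_i$ is linearly compact with topological basis the monomials $v_{k_1}\otimes\dots\otimes v_{k_p}\otimes v^\ast_{\ell_1}\otimes\dots\otimes v^\ast_{\ell_q}$, $k_1,\dots,k_p\geq i$, $\ell_1,\dots,\ell_q\leq -i$, and $\mathbf{V}^{p,q}=\bigcup_i\mathbf{V}^{p,q}_i$ by Proposition~\ref{prop:IP}(i). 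Since $\mathfrak{h}_{\mathfrak{g}(V)}$ acts diagonally in the basis $\mathcal V$, it preserves $V_{\geq i}$ and $(V_{\leq -i})^\ast$, hence $\mathbf{V}^{p,q}_i$, and each such monomial is a weight vector. The point is the finiteness estimate: for fixed $\chi$, only finitely many monomials have weight $\chi$. For $\mathfrak{gl}(V)$ this is clear since $\chi$ determines the multiset of signed indices. For $\mathfrak{o}(V)$ and $\mathfrak{sp}(V)$, where $\chi_{-k}=-\chi_k$, an index $\pm m$ ($m>0$) occurring in such a monomial either contributes to the support of $\chi$ or is cancelled by an opposite index; a cancellation involving a $V$-factor forces $i\leq m\leq -i$, so all indices occurring lie in a finite set depending only on $\chi$ and $i$. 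Hence each $(\mathbf{V}^{p,q}_i)^\chi$ is finite-dimensional and equals the span of the weight-$\chi$ monomials (distinct weights are distinct functionals on $\mathfrak{h}_{\mathfrak{g}(V)}$), only countably many weights occur, and $\mathbf{V}^{p,q}_i=\prod_\chi(\mathbf{V}^{p,q}_i)^\chi$. Taking the union over $i$, $(\mathbf{V}^{p,q})^{wt}=\bigcup_i\bigoplus_\chi(\mathbf{V}^{p,q}_i)^\chi$ is the span of all monomials, i.e. $(V^{wt})^{\otimes p}\otimes(V^{\ast wt})^{\otimes q}=V^{p,q}$.

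For part (ii), write $W=\mathbf{V}^{p,q}$, $W_i=\mathbf{V}^{p,q}_i$, and establish two auxiliary facts. \emph{(A) Linearly compact case:} every closed $\mathfrak{h}_{\mathfrak{g}(V)}$-stable subspace $M'$ of $W_i$ equals $\prod_\chi(M')^\chi$. Here $\supseteq$ holds since $M'$ is closed and $\prod_\chi(M')^\chi=\overline{\bigoplus_\chi(M')^\chi}$; for $\subseteq$, I would pick $t\in\mathfrak{h}_{\mathfrak{g}(V)}$ whose eigenvalues $\chi(t)$ separate the countably many weights of $W_i$ — taking the diagonal entries a $\mathbb{Q}$-linearly independent family of complex numbers (respecting $a_k=-a_{-k}$ in the $\mathfrak{o}/\mathfrak{sp}$ case) works, since each weight is an integral combination of the $\chi_m$. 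For $w\in M'$ with weight support $\{\chi^{(1)},\chi^{(2)},\dots\}$, Lagrange interpolation gives polynomials $p_N$ with $p_N(\chi^{(n)}(t))=\delta_{1n}$ for $n\leq N$; the operators $p_N(t)$ (polynomials in the action of $t$) preserve $M'$, and $p_N(t)w=w_{\chi^{(1)}}+(\text{a tail supported on weights}\neq\chi^{(1)},\dots,\chi^{(N)})$, so $p_N(t)w\to w_{\chi^{(1)}}$ in $W_i$ and $w_{\chi^{(1)}}\in M'$; iterating, every component $w_\chi\in M'$, so $w\in\prod_\chi(M')^\chi$. \emph{(B) Closure formula:} for an $\mathfrak{h}_{\mathfrak{g}(V)}$-stable subspace $N\subseteq V^{p,q}$, setting $N^\chi_i:=N\cap(W_i)^\chi$, one has $\overline N^W=\bigcup_i\prod_\chi N^\chi_i$. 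Indeed the right-hand side contains $N$, and its intersection with each $W_j$ equals the closed subspace $\prod_\chi N^\chi_j$, so it is closed and contains $\overline N^W$; conversely each element of $\prod_\chi N^\chi_i$ is the limit in $W_i$ of its finite partial sums, which lie in $\bigoplus_\chi N^\chi_i\subseteq N$.

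Granting (A) and (B), part (ii) is immediate. Applying (B) to $N$ gives $(\overline N^W)^{wt}=\bigcup_i\bigoplus_\chi N^\chi_i=\bigoplus_\chi N^\chi=N$. For a closed $\mathfrak{h}_{\mathfrak{g}(V)}$-stable $M\subseteq W$, each $M\cap W_i$ is closed and $\mathfrak{h}_{\mathfrak{g}(V)}$-stable in $W_i$ (the linearly compact $W_i$ is closed in $W$), so by (A) $M\cap W_i=\prod_\chi(M\cap W_i)^\chi=\prod_\chi\bigl((M^{wt})\cap(W_i)^\chi\bigr)$, and (B) applied to $N=M^{wt}$ yields $\overline{M^{wt}}^W=\bigcup_i(M\cap W_i)=M$. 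Thus taking weight part and taking closure are mutually inverse, both visibly inclusion-preserving; taking $M=\mathbf{V}^{p,q}$ shows $(\mathbf{V}^{p,q})^{wt}=V^{p,q}$ is dense. The main obstacles are exactly facts (A) and (B): the interpolation argument in (A), which is where the explicit diagonal description of $\mathfrak{h}_{\mathfrak{g}(V)}$ and the finite-dimensionality of weight spaces from part (i) enter, and the closure formula (B), whose subtlety is that $W_i$ is \emph{not} open in the ind-linearly compact $W$, so one cannot argue naively with convergent nets.
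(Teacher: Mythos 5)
Your proof is correct and follows essentially the same route as the paper's: the filtration by the linearly compact pieces $\mathbf{V}^{p,q}_i$ with finite-dimensional weight spaces and the product decomposition for part (i), and then, for part (ii), exactly the two steps the paper states as observations — the correspondence between closed $\mathfrak{h}_{\mathfrak{g}(V)}$-stable subspaces of each $\prod_\chi(\mathbf{V}^{p,q}_i)^\chi$ and the $\mathfrak{h}_{\mathfrak{g}(V)}$-stable subspaces of its weight part (your fact (A), which you prove by a separating diagonal element and interpolation), and the closure formula along the filtration (your fact (B), proved via the final topology) — so you are supplying valid proofs of what the paper leaves implicit. One minor slip: for $\mathfrak{gl}(V)$ a weight does \emph{not} determine the multiset of indices of a monomial when the same index occurs in a $V$-factor and a $V^\ast$-factor (all $v_k\otimes v_k^\ast$ have weight $0$), but the constraint you invoke in the $\mathfrak{o}/\mathfrak{sp}$ case — a cancelling index $k$ must satisfy $i\le k\le -i$ inside $\mathbf{V}^{p,q}_i$ — applies verbatim here and restores the finiteness, so the conclusion is unaffected.
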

	
	\begin{proof} 
    Observe that the space $\mathbf{V}^{p,q}_{i}$ is linearly compact and may be realized as the following inverse limit
	    $$
	    \mathbf{V}^{p,q}_{i} = 
	    \varprojlim_r 	
     V_{[i, r]} ^{\otimes p}
     \otimes 
     (V_{[-r, -i]} ^{\otimes q})^* \ , \ \ r > |i|  \ .
	    $$
	    The canonical map 
	    $$
	    V_{[i, r]} ^{\otimes p}
     \otimes 
     (V_{[-r, -i]} ^{\otimes q})^*
     \rightarrow
     V_{[i, r-1]} ^{\otimes p}
     \otimes 
     (V_{[-(r-1), -i]} ^{\otimes q})^*
	    $$
	    is a surjective homomorphism of $\mathfrak{h}_{\mathfrak{g}(V)}$-modules. It has a unique right inverse due to the fact that it is an isomorphism when restricted to the direct sum of the weight subspaces of 
	    $V_{[i, r]} ^{\otimes p}
     \otimes 
     (V_{[-r, -i]} ^{\otimes q})^*$ 
     corresponding to weights of 
	    $V_{[i, r-1]} ^{\otimes p}
     \otimes 
     (V_{[-(r-1),-i])} ^{\otimes q})^*$. 
     It follows that all weight spaces relative to $\mathfrak{h}_{\mathfrak{g}(V)}$ of 
	    $\mathbf{V}^{p,q}_{i}$ are finite dimensional. Furthermore, the canonical map
	    $$
	    \prod_\chi (\mathbf{V}^{p,q}_{i})^\chi =
	    \varprojlim_\chi (\mathbf{V}^{p,q}_{i})^\chi
	    \rightarrow 
	    \varprojlim_r 
     V_{[i, r]} ^{\otimes p}
     \otimes 
     (V_{[-r, -i]} ^{\otimes q})^*
	    $$
	    	is an isomorphism of topological vector spaces. The weights of 
	$\mathbf{V}^{p,q}_{i}$ are of the form $\chi = \sum_{k}n_k \chi_k$ (note that $\chi_k = \varepsilon_k$ for $\mathfrak{g}(V) = \mathfrak{gl}(V)$), 
 where the integers $n_k$ satisfy
\[
\sum_{k}|n_k | \leq p+q \ .
\]

	Furthermore, the above implies
	\begin{equation*}
	    \begin{split}
	    (\mathbf{V}^{p,q})^{wt} & = \bigcup_{i,j} (\mathbf{V}^{p,q}_{i})^{wt} =
	\bigcup_{i,r} (V_{[i, r]} ^{\otimes p}
     \otimes 
     (V_{[-r, -i]} ^{\otimes q})^*)^{wt} \\
	& = 
	\bigcup_{i, r} (V_{[i, r]} ^{\otimes p}
     \otimes 
     (V_{[-r, -i]} ^{\otimes q})^*) 
	=  V ^{\otimes p}\otimes V_\ast^{\otimes q} = {V}^{p,q} \ ,
	\end{split}
	\end{equation*}
		where the injection 
	$V_{[i, r-1]} ^{\otimes p}
     \otimes 
     (V_{[-(r-1), -i]} ^{\otimes q})^*
	\hookrightarrow
	V_{[i, r]} ^{\otimes p}
     \otimes 
     (V_{[-r, -i]} ^{\otimes q})^*$ is the unique homomorphism of $\mathfrak{h}_{\mathfrak{g}(V)}$-modules which is right inverse to the canonical surjection 
	$V_{[i, r]} ^{\otimes p}
     \otimes 
     (V_{[-r, -i]} ^{\otimes q})^*
	\rightarrow
	V_{[i, r-1]} ^{\otimes p}
     \otimes 
     (V_{[-(r-1), -i]} ^{\otimes q})^*$.
	This concludes the proof of $(i)$.
	
	Let us now prove $(ii)$. Observe that, for each $i$, there is an order-preserving bijection between the closed $\mathfrak{h}_{\mathfrak{g}(V)}$-stable subspaces of 
    $\prod_\chi (\mathbf{V}^{p,q}_{i})^\chi $ 
    and the $\mathfrak{h}_{\mathfrak{g}(V)}$-stable subspaces of its weight part $\bigoplus_\chi (\mathbf{V}^{p,q}_{i})^\chi$. If $L$ is an $\mathfrak{h}_{\mathfrak{g}(V)}$-stable subspace of ${V}^{p,q}$, then $L = \bigcup_{i} L_{i}$ where $L_{i} = L \cap (\mathbf{V}^{p,q}_{i})^{wt}$. 
	Furthermore, one checks that $\overline{L_{i}} = \overline{L_{i-1}}\cap (\mathbf{V}^{p,q}_{i})$.
	Therefore the closure $\overline{L}$ of $L$ in $\mathbf{V}^{p,q}$ is equal to $\bigcup_{i} \overline{L_{i}}$, where $\overline{L_{i}}$ is the closure of $L_{i}$ in $\mathbf{V}^{p,q}_{i}$. Thus, taking the weight part gives rise to an order-preserving bijection between closed $\mathfrak{h}_{\mathfrak{g}(V)}$-stable subspaces of $\mathbf{V}^{p,q}$ and $\mathfrak{h}_{\mathfrak{g}(V)}$-stable subspaces of $V^{p,q}$. The inverse bijection is given by taking the closure. In particular, one has $\overline{(\mathbf{V}^{p,q})^{wt}}=\mathbf{V}^{p,q}$, so $(\mathbf{V}^{p,q})^{wt}$ is dense in $\mathbf{V}^{p,q}$. This concludes the proof of $(ii)$.
	\end{proof}

	\subsection{The categories $\mathbf{T}_{\mathfrak{g}(V)}$ and $\widehat{\mathbf{T}}_{\mathfrak{g}(V)}$}

        The definition of the categories $\mathbf{T}_{\mathfrak{gl}(V)}$, $\mathbf{T}_{\mathfrak{o}(V)}$, $\mathbf{T}_{\mathfrak{sp}(V)}$ runs parallel, therefore it is convenient to have a single definition of a category $\mathbf{T}_{\mathfrak{g}(V)}$ of topological $\mathfrak{g}$-modules, where $\mathfrak{g}(V)$ is one of the topological Lie algebras $\mathfrak{gl}(V)$, $\mathfrak{o}(V)$ or $\mathfrak{sp}(V)$.
        Analogously, it is possible to give a uniform definition of the categories $\widehat{\mathbf{T}}_{\mathfrak{gl}(V)}$, $\widehat{\mathbf{T}}_{\mathfrak{o}(V)}$, $\widehat{\mathbf{T}}_{\mathfrak{sp}(V)}$. 
	
	\begin{defn}\label{def:tg}
		The objects of the category $\mathbf{T}_{\mathfrak{g}(V)}$ are topological vector spaces of the form $Z/Q$, where $Z$ is a closed $\mathfrak{g}(V)$-stable subspace of a finite direct sum of topological vector spaces of the form $\mathbf{V}^{p,q}$, and $Q$ is a closed $\mathfrak{g}(V)$-stable subspace of $Z$. The morphisms in the category $\mathbf{T}_{\mathfrak{g}(V)}$ are $\mathfrak{g}(V)$-equivariant continuous linear maps.
	\end{defn}

        \begin{rmk}
            Lemma~\ref{lem:vvast} implies that in the case of
            $\mathfrak{o}(V)$ or $\mathfrak{sp}(V)$ in Definition~\ref{def:tg}
            it is sufficient to consider only spaces $\mathbf{V}^{p,q}$ with $q=0$.
        \end{rmk}

        \begin{lem}\label{T-split}
		Let $W$ be an object of $\mathbf{T}_{\mathfrak{g}(V)}$ and let $Z$ be a closed $\mathfrak{h}_{\mathfrak{g}(V)}$-stable subspace of $W$. Then there is a closed $\mathfrak{h}_{\mathfrak{g}(V)}$-stable subspace $Q$ of $W$ such that the canonical map $Z\oplus Q \rightarrow W$ is an $\mathfrak{h}_{\mathfrak{g}(V)}$-equivariant topological isomorphism. 
	\end{lem}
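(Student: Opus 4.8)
The plan is to work entirely with the $\mathfrak{h}:=\mathfrak{h}_{\mathfrak{g}(V)}$-module structure and to build $Q$ out of $\mathfrak{h}$-stable complements of $Z$ inside the (finite-dimensional) weight spaces, glued along the natural linearly compact exhaustion. First I would record that $W$, being a subquotient of a finite direct sum $M$ of spaces $\mathbf{V}^{p,q}$, inherits the structure of Lemma~\ref{lem: weightpart}: there is an exhaustion $W=\bigcup_i W_i$ by open linearly compact $\mathfrak{h}$-submodules, which may be taken increasing $W_i\subseteq W_{i+1}$, with $W_i=\prod_\chi W_i^\chi$ and all weight spaces finite-dimensional; every closed $\mathfrak{h}$-stable subspace $X$ of $W$ satisfies $X\cap W_i=\prod_\chi(X\cap W_i^\chi)$ on each level; and, crucially, every closed $\mathfrak{h}$-stable subspace $X$ of $W$ equals the closure of its weight part $X^{wt}$, so that $X^{wt}=0$ forces $X=0$. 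Checking that these properties propagate from $\mathbf{V}^{p,q}$ through finite direct sums, closed subspaces and quotients is routine, repeating the arguments in the proof of Lemma~\ref{lem: weightpart}; alternatively one may reduce the whole statement to the case $W=M$ beforehand, since a topological $\mathfrak{h}$-complement in $M$ of a closed subspace contained in a closed $\mathfrak{h}$-stable subspace $Z_0$ restricts to one in $Z_0$, and a topological $\mathfrak{h}$-complement in $Z_0$ of a closed $\mathfrak{h}$-stable subspace containing $Q_0$ descends to $Z_0/Q_0$.

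Next, for each weight $\chi$ I set $Z^\chi:=Z\cap W^\chi$, a subspace of the finite-dimensional space $W^\chi$ carrying the increasing exhaustive filtration $\{W_i^\chi\}_i$. Since $W^\chi$ is finite-dimensional, I can choose a complement $C^\chi$ of $Z^\chi$ in $W^\chi$ that is \emph{compatible with the filtration}, i.e. such that $C^\chi\cap W_i^\chi$ is a complement of $Z^\chi\cap W_i^\chi$ in $W_i^\chi$ for every $i$. One constructs $C^\chi$ level by level over the finitely many distinct steps of the filtration, using that a complement $D_i$ of $Z^\chi\cap W_i^\chi$ in $W_i^\chi$ is independent from $Z^\chi\cap W_{i+1}^\chi$ (because $Z^\chi\cap W_{i+1}^\chi\cap W_i^\chi=Z^\chi\cap W_i^\chi$) and hence extends to a complement $D_{i+1}\supseteq D_i$ of $Z^\chi\cap W_{i+1}^\chi$ in $W_{i+1}^\chi$ with $D_{i+1}\cap W_i^\chi=D_i$ for dimension reasons. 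Then I define
$$
Q\ :=\ \bigcup_i\ \prod_\chi\bigl(C^\chi\cap W_i^\chi\bigr)\ \subseteq\ W.
$$
One verifies $Q\cap W_i=\prod_\chi(C^\chi\cap W_i^\chi)$, a closed $\mathfrak{h}$-stable subspace of the linearly compact $W_i$; since the $W_i$ are open and cover $W$, the subspace $Q$ is closed and $\mathfrak{h}$-stable in $W$, with weight space $Q^\chi=C^\chi$.

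It remains to check that the canonical (addition) map $Z\oplus Q\to W$ is an $\mathfrak{h}$-equivariant topological isomorphism. Equivariance is built in. For injectivity, $Z\cap Q$ is closed and $\mathfrak{h}$-stable with $(Z\cap Q)^\chi=Z^\chi\cap C^\chi=0$ for every $\chi$, hence $(Z\cap Q)^{wt}=0$ and therefore $Z\cap Q=0$ by the first paragraph. On each level, $Z\cap W_i=\prod_\chi(Z^\chi\cap W_i^\chi)$ and $Q\cap W_i=\prod_\chi(C^\chi\cap W_i^\chi)$, and by filtration-compatibility $(Z^\chi\cap W_i^\chi)\oplus(C^\chi\cap W_i^\chi)=W_i^\chi$ for all $\chi$, so $(Z\cap W_i)\oplus(Q\cap W_i)\to W_i$ is a continuous bijection of linearly compact spaces, hence a topological isomorphism. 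Passing to the inductive limit over $i$, using that the topologies of $Z$, $Q$ and $W$ are the inductive limits of those on $Z\cap W_i$, $Q\cap W_i$ and $W_i$ along the open inclusions, and that a finite direct sum commutes with this colimit, shows that $Z\oplus Q\to W$ is a topological isomorphism. The main obstacle, and the reason finite-dimensionality of the weight spaces is essential, is the second paragraph: an arbitrary $\mathfrak{h}$-stable complement of $Z^{wt}$ in $W^{wt}$ need not have a closure that splits $Z$ topologically on every level, so the complement must be chosen compatibly with the exhaustion filtration before passing to closures. A secondary point requiring care is the propagation of the ``closure of the weight part'' property in the first paragraph, particularly through quotients.
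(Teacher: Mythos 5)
Your construction is essentially the paper's own proof: the paper likewise reduces to $W=\mathbf{V}^{p,q}$, chooses, for each weight $\chi$, supplementary subspaces of $(Z_i)^\chi$ inside $(W_i)^\chi$ compatibly in $i$, takes the product over $\chi$ at each level $i$ and then the union over $i$. One assertion in your second paragraph is false, however: the weight spaces $W^\chi$ of the whole object need not be finite-dimensional, and the filtration $\{W_i^\chi\}_i$ need not have finitely many distinct steps. For example, for $\mathfrak{g}(V)=\mathfrak{gl}(V)$ and $W=\mathbf{V}^{1,1}$ the zero-weight space contains all vectors $v_k\otimes v_k^\ast$, hence is infinite-dimensional; Lemma~\ref{lem: weightpart}(i) only gives finite-dimensionality of the weight spaces of each linearly compact level $W_i$. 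This does not break your argument: the level-by-level extension $D_i\subseteq D_{i+1}$ with $D_{i+1}\cap W_i^\chi=D_i$ uses only the finite-dimensionality of each $W_i^\chi$ (for the dimension count), and it runs verbatim as an induction over all levels $i$, setting $C^\chi:=\bigcup_i D_i$; so the justification should simply be restated in that form, dropping the claim that $W^\chi$ is finite-dimensional. On the positive side, your explicit treatment of the reduction from a subquotient $Z_0/Q_0$ of $M$ to $M$ itself (restricting a topological $\mathfrak{h}_{\mathfrak{g}(V)}$-complement to $Z_0$ and then descending to the quotient) spells out a step the paper dismisses as clear from the definition.
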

	
	\begin{proof}	    
	    It is clear from Definition~\ref{def:tg} that it suffices to consider the case $W = \mathbf{V}^{p,q}$. Here, the filtration $\{\mathbf{V}^{p,q}_{i}\}$ induces an $\mathfrak{h}_{\mathfrak{g}(V)}$-stable filtration by closed linearly compact subspaces 
	    $W = \bigcup_{i} W_{i}$ which, by Lemma~\ref{lem: weightpart} $(i)$, are isomorphic to direct products of finite-dimensional weight spaces. Analogously, the closed subspace $Z$ has the induced filtration $Z = \bigcup_{i} Z_{i}$.  For every weight $\chi$ of $W$, one may choose compatible supplementary subspaces $(Q_{i})^\chi$ of $(Z_{i})^\chi$ in $(W_{i})^\chi$. Then the subspaces $Q_{i} := \prod_\chi (Q_{i})^\chi$ are closed and $\mathfrak{h}_{\mathfrak{g}(V)}$-stable, and  there are isomorphisms $Z_{i} \oplus Q_{i} \cong W_{i}$ of topological $\mathfrak{h}_{\mathfrak{g}(V)}$-modules. Consequently, $Q:=Q_{i}$  is a closed and $\mathfrak{h}_{\mathfrak{g}(V)}$-stable subspace of $W$, and the canonical morphism $Z\oplus Q \rightarrow W$ is an isomorphism of topological $\mathfrak{h}_{\mathfrak{g}(V)}$-modules. This concludes the proof.
	\end{proof}

        \begin{defn}\label{def:gvw}
            Let $\mathfrak{g}(\infty)$ be the subalgebra of $\mathfrak{g}(V)$ stabilizing $V^{wt}$.
        \end{defn}

        \begin{defn}\label{tgvw}
            The objects of the category $\mathbb{T}_{\mathfrak{g}(\infty)}$ are vector spaces of the form $Z'/Q'$, where $Z'$ is a $\mathfrak{g}(\infty)$-stable subspace of a finite direct sum of vector spaces of the form $(\mathbf{V}^{p,q})^{wt}$, and $Q'$ is a $\mathfrak{g}(\infty)$-stable subspace of $Z'$. 
            The morphisms in the category $\mathbb{T}_{\mathfrak{g}(\infty)}$ are $\mathfrak{g}(\infty)$-equivariant linear maps.
        \end{defn}

        \begin{lem}
	    Taking the weight part is a functor $(\phantom{a})^{wt}: \mathbf{T}_{\mathfrak{g}(V)} \rightarrow \mathbb{T}_{\mathfrak{g}(\infty)}$.
	\end{lem}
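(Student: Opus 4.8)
The plan is to verify, in order, that $(\phantom{a})^{wt}$ sends objects to objects and morphisms to morphisms, and that it preserves identities and composition; only the first point requires real work.

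For objects, let $W=Z/Q$ be an object of $\mathbf{T}_{\mathfrak{g}(V)}$, so that $Z$ is a closed $\mathfrak{g}(V)$-stable subspace of a finite direct sum $\bigoplus_\alpha\mathbf{V}^{p_\alpha,q_\alpha}$ and $Q\subseteq Z$ is closed and $\mathfrak{g}(V)$-stable. Taking the weight part relative to $\mathfrak{h}_{\mathfrak{g}(V)}$ commutes with finite direct sums, so $\big(\bigoplus_\alpha\mathbf{V}^{p_\alpha,q_\alpha}\big)^{wt}=\bigoplus_\alpha V^{p_\alpha,q_\alpha}$ by Lemma~\ref{lem: weightpart}$(i)$; moreover, for any $\mathfrak{h}_{\mathfrak{g}(V)}$-submodule $N$ of an $\mathfrak{h}_{\mathfrak{g}(V)}$-module $M$ one has $N^{wt}=N\cap M^{wt}$, because $N^{wt}$ is the largest semisimple $\mathfrak{h}_{\mathfrak{g}(V)}$-submodule of $N$. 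Applying this to $Q\subseteq Z\subseteq\bigoplus_\alpha\mathbf{V}^{p_\alpha,q_\alpha}$ gives $Q^{wt}\subseteq Z^{wt}\subseteq\bigoplus_\alpha V^{p_\alpha,q_\alpha}=\bigoplus_\alpha(\mathbf{V}^{p_\alpha,q_\alpha})^{wt}$. These subspaces are $\mathfrak{g}(\infty)$-stable: $\mathfrak{g}(\infty)\subseteq\mathfrak{g}(V)$ stabilizes $Z$ and $Q$, and it stabilizes $\bigoplus_\alpha(\mathbf{V}^{p_\alpha,q_\alpha})^{wt}$ by functoriality, since it stabilizes $V^{wt}$ by definition (and, in the orthogonal and symplectic cases, also $(V^\ast)^{wt}$ by Lemma~\ref{lem:vvast}). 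Hence $Z^{wt}$ is a $\mathfrak{g}(\infty)$-stable subspace of a finite direct sum of the spaces $(\mathbf{V}^{p,q})^{wt}$, and $Q^{wt}$ is a $\mathfrak{g}(\infty)$-stable subspace of $Z^{wt}$.

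It then remains to identify $W^{wt}=(Z/Q)^{wt}$ with $Z^{wt}/Q^{wt}$ as $\mathfrak{g}(\infty)$-modules, which exhibits $W^{wt}$ as an object of $\mathbb{T}_{\mathfrak{g}(\infty)}$. The quotient map $\pi\colon Z\to Z/Q$ is $\mathfrak{g}(\infty)$-equivariant and $\mathfrak{h}_{\mathfrak{g}(V)}$-equivariant, hence carries each weight space of $Z$ into the corresponding weight space of $Z/Q$ and restricts to a $\mathfrak{g}(\infty)$-equivariant linear map $\pi^{wt}\colon Z^{wt}\to W^{wt}$ with kernel $Z^{wt}\cap Q=Q^{wt}$. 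The one genuinely non-formal point is the surjectivity of $\pi^{wt}$: here I would invoke Lemma~\ref{T-split}, applied to the object $Z$ and its closed $\mathfrak{h}_{\mathfrak{g}(V)}$-stable subspace $Q$, to obtain a closed $\mathfrak{h}_{\mathfrak{g}(V)}$-stable complement $Q'$ with $Z\cong Q\oplus Q'$ topologically and $\mathfrak{h}_{\mathfrak{g}(V)}$-equivariantly. Then $\pi$ restricts to an $\mathfrak{h}_{\mathfrak{g}(V)}$-isomorphism $Q'\xrightarrow{\sim}Z/Q$, so already $(Q')^{wt}\subseteq Z^{wt}$ maps onto $W^{wt}$, and $\pi^{wt}$ therefore induces a $\mathfrak{g}(\infty)$-equivariant isomorphism $Z^{wt}/Q^{wt}\xrightarrow{\sim}W^{wt}$.

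Finally, if $f\colon W_1\to W_2$ is a morphism in $\mathbf{T}_{\mathfrak{g}(V)}$, then $f$ is $\mathfrak{h}_{\mathfrak{g}(V)}$-equivariant, so $f(W_1^\chi)\subseteq W_2^\chi$ for every weight $\chi$ and hence $f(W_1^{wt})\subseteq W_2^{wt}$; the restriction $f^{wt}$ is a $\mathfrak{g}(\infty)$-equivariant linear map, i.e.\ a morphism in $\mathbb{T}_{\mathfrak{g}(\infty)}$, and because $f^{wt}$ is literally the restriction of $f$ the identities $(\mathrm{id}_W)^{wt}=\mathrm{id}_{W^{wt}}$ and $(g\circ f)^{wt}=g^{wt}\circ f^{wt}$ are clear. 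Thus the step to address carefully is the exactness of the weight-part operation on the sequence $0\to Q\to Z\to W\to 0$, which is exactly what the splitting of Lemma~\ref{T-split} delivers; everything else is bookkeeping with Lemma~\ref{lem: weightpart}$(i)$.
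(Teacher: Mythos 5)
Your proposal is correct and follows essentially the same route as the paper: identify $(\mathbf{V}^{p,q})^{wt}=V^{p,q}$ via Lemma~\ref{lem: weightpart}$(i)$, note $\mathfrak{g}(\infty)$-stability of $Z^{wt}$ and $Q^{wt}$, and use the splitting of Lemma~\ref{T-split} to obtain $W^{wt}\cong Z^{wt}/Q^{wt}$, with morphisms handled by restriction. The only difference is that you spell out explicitly (via the complement $Q'$ and the identification $N^{wt}=N\cap M^{wt}$) what the paper leaves implicit in the phrase ``Lemma~\ref{T-split} implies that $W^{wt}\cong Z^{wt}/Q^{wt}$,'' which is a welcome but inessential elaboration.
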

        \begin{proof}
         By Lemma~\ref{lem: weightpart}, the weight part of $\mathbf{V}^{p,q}$ is $V^{p,q}$. Note that $V^{p,q}$ is not $\mathfrak{g}(V)$-stable, but is stable by the dense Lie subalgebra $\mathfrak{g}(\infty)$. Let $W\cong Z/Q$ be an object of $\mathbf{T}_{\mathfrak{g}(V)}$ where $Q\subset Z$ are two closed $\mathfrak{g}(V)$-stable subspaces of a finite direct sum  $\bigoplus_i \mathbf{V}^{p_i,q_i}$. Then the weight parts $Q^{wt}$ and $Z^{wt}$ are $\mathfrak{g}(\infty)$-stable subspaces of $\bigoplus_i V^{p_i,q_i}$, and Lemma~\ref{T-split} implies that $W^{wt} \cong Z^{wt}/Q^{wt}$. Thus the weight part $W^{wt}$ is an object of the category $\mathbb{T}_{\mathfrak{g}(\infty)}$. Finally, it is clear that morphisms in $\mathbf{T}_{\mathfrak{g}(V)}$ restrict to $\mathfrak{g}(\infty)$-equivariant morphisms between weight parts. This proves the statement.  
        \end{proof}

        \begin{lem}\label{lem:full}
	The map 
	$$\Phi:\Hom_{\mathbf{T}_{\mathfrak{g}(V)}}(\mathbf{V}^{p,q}, \mathbf{V}^{p', q'})
	\rightarrow
	\Hom_{\mathbb{T}_{\mathfrak{g}(\infty)}}((\mathbf{V}^{p,q})^{wt}, (\mathbf{V}^{p', q'})^{wt})
	$$
	induced by the functor $(\phantom{a})^{wt}$, is an isomorphism of vector spaces.
	\end{lem}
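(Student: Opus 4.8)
I would establish the injectivity and the surjectivity of $\Phi$ separately, the first being a soft density argument and the second carrying the real content. For \emph{injectivity}: by Lemma~\ref{lem: weightpart}(\ref{bij}) the weight part $(\mathbf{V}^{p,q})^{wt}={V}^{p,q}$ is dense in $\mathbf{V}^{p,q}$, while $\mathbf{V}^{p',q'}$ is a Hausdorff topological vector space. A morphism $f\in\Hom_{\mathbf{T}_{\mathfrak{g}(V)}}(\mathbf{V}^{p,q},\mathbf{V}^{p',q'})$ is a continuous linear map, hence is determined by its restriction to the dense subspace ${V}^{p,q}$; thus $\Phi(f)=f|_{{V}^{p,q}}=0$ forces $f=0$, and $\Phi$ is injective.

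For \emph{surjectivity} the key input is the explicit description, recalled in \cite{EP}, of the morphism spaces of the categories $\mathbb{T}_{\mathfrak{gl}(\infty)}$, $\mathbb{T}_{\mathfrak{o}(\infty)}$, $\mathbb{T}_{\mathfrak{sp}(\infty)}$: over the finitary locally finite Lie algebra $\mathfrak{g}(V)^{\mathrm{fin}}$ of finite-rank operators (which is contained in $\mathfrak{g}(\infty)$), the space $\Hom((\mathbf{V}^{p,q})^{wt},(\mathbf{V}^{p',q'})^{wt})$ is finite dimensional and spanned by composites of elementary morphisms, namely permutations of tensor factors of the same type together with contractions --- of a $V^{wt}$-factor against a $V^{\ast wt}$-factor via the evaluation pairing in the $\mathfrak{gl}$ case, and of two $V^{wt}$-factors via the form $A$ (resp.\ $B$) in the $\mathfrak{o}$ (resp.\ $\mathfrak{sp}$) case, where by the remark following Definition~\ref{def:tg} and by Lemma~\ref{lem:vvast} one may assume $q=0$ in the orthogonal and symplectic cases. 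Since these elementary composites are visibly $\mathfrak{g}(V)$-equivariant and $\mathfrak{g}(V)\supseteq\mathfrak{g}(\infty)\supseteq\mathfrak{g}(V)^{\mathrm{fin}}$, the $\Hom$-space over $\mathfrak{g}(\infty)$ is squeezed between the ones over $\mathfrak{g}(V)$ and over $\mathfrak{g}(V)^{\mathrm{fin}}$, and therefore coincides with the span of the elementary composites.

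It then remains to lift. The symmetry isomorphisms of the symmetric monoidal category $(\mathcal{I},\widehat{\otimes}^\ast)$ of Proposition~\ref{prop:IP} provide the permutations $\mathbf{V}^{p,q}\to\mathbf{V}^{p,q}$; the evaluation pairing $V\widehat{\otimes}^\ast V^\ast\to\mathbb{C}$ together with exactness of $\widehat{\otimes}^\ast$ provides the contractions $\mathbf{V}^{p,q}\to\mathbf{V}^{p-1,q-1}$; and the continuous forms $A,B:V\widehat{\otimes}^\ast V\to\mathbb{C}$, invariant by the very definition of $\mathfrak{o}(V)$ and $\mathfrak{sp}(V)$, provide the corresponding contractions in the orthogonal and symplectic cases. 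Each such lift is continuous and $\mathfrak{g}(V)$-equivariant, hence a morphism of $\mathbf{T}_{\mathfrak{g}(V)}$, and an inspection on basis tensors shows that its restriction to weight parts is precisely the elementary morphism it lifts. Because taking weight parts is a functor, the composite of the lifts of the elementary factors of a combinatorial morphism $g_\alpha$ restricts on weight parts to $g_\alpha$; taking the matching linear combination of these composites produces, for any $g=\sum_\alpha c_\alpha g_\alpha$ in $\Hom_{\mathfrak{g}(\infty)}((\mathbf{V}^{p,q})^{wt},(\mathbf{V}^{p',q'})^{wt})$, a morphism $\widetilde{g}\in\Hom_{\mathbf{T}_{\mathfrak{g}(V)}}(\mathbf{V}^{p,q},\mathbf{V}^{p',q'})$ with $\Phi(\widetilde{g})=\widetilde{g}^{\,wt}=g$. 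This gives surjectivity, and with it the lemma.

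The single nontrivial ingredient --- the main obstacle --- is the imported description of the classical $\Hom$-spaces as spanned by permutations and contractions: without it there is no obvious reason why a $\mathfrak{g}(\infty)$-equivariant linear map of weight parts should even be continuous for the subspace topologies, let alone extend to $\mathbf{V}^{p,q}$. The remaining points --- that the lifts are well defined, continuous, $\mathfrak{g}(V)$-equivariant, and compatible with passage to weight parts --- are routine verifications.
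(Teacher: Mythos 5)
Your proposal is correct and follows essentially the same route as the paper: injectivity from the density of $(\mathbf{V}^{p,q})^{wt}$ in $\mathbf{V}^{p,q}$ (Lemma~\ref{lem: weightpart}(\ref{bij})), and surjectivity from the description in \cite[Section 6]{DPS} of the classical $\Hom$-spaces as spanned by compositions of contractions and permutations, which extend to the completed tensor spaces by continuity. Your additional care in sandwiching the $\mathfrak{g}(\infty)$-equivariant $\Hom$-space between the ones for the finitary subalgebra and for $\mathfrak{g}(V)$, and in exhibiting the lifts explicitly via the symmetric monoidal structure, the evaluation pairing, and the forms $A$, $B$, is a welcome elaboration of steps the paper treats as immediate.
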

 \begin{proof}
     Since  by Lemma~\ref{lem: weightpart} $(ii)$ the subspace $(\mathbf{V}^{p,q})^{wt}$ is dense in $\mathbf{V}^{p,q}$, the map $\Phi$ is injective. Furthermore, by \cite[Section 6]{DPS}, the vector space $\Hom_{\mathbb{T}_{\mathfrak{gl}(\infty)}}((\mathbf{V}^{p,q})^{wt}, (\mathbf{V}^{p', q'})^{wt})$ is generated by compositions of contractions and permutations. These morphisms clearly extend  to $\mathbf{V}^{p,q}$ by continuity, and hence are in the image of the map $\Phi$. Thus $\Phi$ is also surjective.
 \end{proof}

        Recall that the definition of a strict morphism in a quasi-abelian category is given in \cite[1.1.7]{Sch},  or see \cite[Section~2]{EP}.
        
        \begin{prop}\label{prop:strict}
	    Let $$W = \bigoplus_{k=1}^r \mathbf{V}^{p_k , q_k} \ \ \mathrm{and} 
	    \ \ W' = \bigoplus_{k'=1}^{r'} \mathbf{V}^{p'_{k'} , q'_{k'}}. $$
	    Then every $f\in\Hom_{\mathbf{T}_{\mathfrak{g}(V)}}(W, W')$ is strict.
	    Moreover, if $Q$ is a closed $\mathfrak{g}(V)$-stable subspace of $W$, every restriction $f_{|Q}$ is also strict.
	\end{prop}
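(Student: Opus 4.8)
The plan is to verify strictness directly in the category of topological vector spaces. Recall (see \cite[1.1.7]{Sch} or \cite[Section~2]{EP}) that a morphism $f$ is strict precisely when $\ker f$ is closed, $f(W)$ is closed in $W'$, and the induced map $W/\ker f\to f(W)$ is a topological isomorphism; this is what strictness means in the quasi-abelian setting here. The kernel is closed since $f$ is continuous. Applying Lemma~\ref{T-split} once to split the closed $\mathfrak{h}_{\mathfrak{g}(V)}$-stable subspace $\ker f$ off from $W$ and once to split the closed $\mathfrak{h}_{\mathfrak{g}(V)}$-stable subspace $\overline{f(W)}$ off from $W'$, the statement reduces to showing that the restriction of $f$ to a closed $\mathfrak{h}_{\mathfrak{g}(V)}$-stable complement $C$ of $\ker f$ in $W$ is a topological isomorphism onto $\overline{f(W)}$; in particular one must show that $f(W)$ is closed.

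The structural input is Lemma~\ref{lem:full}: applied to each pair of direct summands, it shows that $f$ is a linear combination of compositions of contractions and permutations (for $\mathfrak{o}(V)$ and $\mathfrak{sp}(V)$ one uses the Remark after Definition~\ref{def:tg} together with the contractions attached to the forms $A$ and $B$). Each of these sends $\mathbf{V}^{p,q}_i$ into $\mathbf{V}^{p',q'}_i$ for every $i$, so $f(W_i)\subseteq W'_i$, where $W_i:=\bigoplus_k(\mathbf{V}^{p_k,q_k})_i$ and $W'_i$ is defined analogously. By Lemma~\ref{lem: weightpart}$(i)$ each $W_i$ is linearly compact, so $f(W_i)$ is a linearly compact --- hence closed --- subspace of $W'_i$; and, choosing $C$ compatibly with the filtration as in the proof of Lemma~\ref{T-split}, we obtain $C=\bigcup_i C_i$ with $C_i:=C\cap W_i$ linearly compact, $W_i=(\ker f\cap W_i)\oplus C_i$, and $f|_{C_i}\colon C_i\xrightarrow{\ \sim\ }f(C_i)=f(W_i)$ a topological isomorphism (a continuous linear bijection of linearly compact spaces is always a topological isomorphism). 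It then remains to show that $f(W)=\bigcup_i f(W_i)$ is closed and to glue these isomorphisms.

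The heart of the argument, and the step I expect to be the main obstacle, is the closedness of $f(W)$. Since $W'=\bigcup_m W'_m$ with each $W'_m$ open, it suffices to establish the following boundedness statement: for every $m$ there is $i(m)$, with $i(m)\to-\infty$ as $m\to-\infty$ (one may take $i(m)=-\max(|m|,1)$), such that
\[
f^{-1}(W'_m)\ \subseteq\ W_{i(m)}+\ker f .
\]
Both sides are closed $\mathfrak{h}_{\mathfrak{g}(V)}$-stable subspaces of $W$: the right-hand side is a union of translates of the open subspace $W_{i(m)}$, hence open and closed. By the order-preserving bijection of Lemma~\ref{lem: weightpart}$(ii)$ this containment is equivalent to its weight-part analogue $(f^{wt})^{-1}\bigl((W'_m)^{wt}\bigr)\subseteq W_{i(m)}^{wt}+\ker f^{wt}$, which, since $f^{wt}$ preserves weight spaces, amounts weight by weight to
\[
f^{wt}(W^{\chi})\cap (W'_m)^{\chi}\ \subseteq\ f^{wt}\bigl(W^{\chi}_{i(m)}\bigr)\qquad\text{for every weight }\chi .
\]
This is an elementary bookkeeping statement about contractions and permutations: a weight $\chi$ occurring in $W'_m$ is supported, on the relevant side, only on indices of absolute value $\ge m$, whereas any preimage of a weight-$\chi$ tensor in the bounded-degree space $W$ differs from one all of whose tensor indices lie in the range permitted at level $i(m)$ only by ``uncontracted'' matched pairs, which may be placed at index $-1$; hence the preimage can be chosen in $W^{\chi}_{i(m)}$. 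Granting the boundedness statement, $f\bigl(f^{-1}(W'_m)\bigr)=f(W_{i(m)})\cap W'_m$ is closed, so $f(W)\cap W'_m$ is closed for every $m$ and therefore $f(W)$ is closed; since $W^{wt}$ is dense in $W$ (Lemma~\ref{lem: weightpart}$(ii)$), $f(W)=\overline{f(W)}=\overline{f^{wt}(W^{wt})}=:S$.

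It remains to see that $f|_C\colon C\to S$ is a topological isomorphism; it is continuous, injective (its kernel is $C\cap\ker f=0$), and surjective ($f(C)=f(W)=S$). Writing $S_m:=S\cap W'_m$, the boundedness statement gives $S_m=f\bigl(f^{-1}(W'_m)\bigr)\subseteq f(C_{i(m)})\subseteq S_{i(m)}$, so each $f(C_{i(m)})$, containing the open subspace $S_m$ of $S$, is open in $S$; as $m\to-\infty$ the $C_{i(m)}$ exhaust $C$ and the $f(C_{i(m)})$ exhaust $S$. Since each $f|_{C_{i(m)}}\colon C_{i(m)}\to f(C_{i(m)})$ is a topological isomorphism between open subspaces, $f|_C$ is a topological isomorphism. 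Thus $f$ factors as the strict epimorphism $W\twoheadrightarrow W/\ker f$ followed by the strict monomorphism $W/\ker f\cong C\xrightarrow{\ \sim\ }S\hookrightarrow W'$, and so is strict. For the ``moreover'' one runs the same argument with the object $Q$ in place of $W$: the restriction to $Q$ of a contraction or permutation still maps $Q\cap W_i$ into $W'_i$, Lemmas~\ref{T-split} and~\ref{lem: weightpart} apply to $Q$, and the boundedness statement for $Q$ follows from the one for $W$ since every weight of $Q$ is a weight of $W$. The only genuinely technical point is the weight-combinatorial containment displayed above; everything else is a formal consequence of the quoted lemmas together with the standard facts that a continuous linear image of a linearly compact space is closed and that a continuous linear bijection of linearly compact spaces is a topological isomorphism.
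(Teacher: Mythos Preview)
Your overall architecture matches the paper's: reduce strictness to closedness of the image, slice by the filtrations $W_i,\,W'_m$, and settle the question weight by weight. Two points deserve comment.

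\medskip

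\textbf{The main gap.} The ``boundedness statement'' you isolate is exactly the paper's claim that $Z_{i'}=Z_{i',i}$ for suitable $i$, and you are right that this is the crux. But your justification is not a proof. The picture ``any preimage differs from one in $W_{i(m)}^\chi$ only by uncontracted matched pairs, which may be placed at index $-1$'' is correct for a \emph{single} contraction composed with a permutation: there the kernel is literally generated by differences $v_a\otimes v_a^*-v_b\otimes v_b^*$ and one can relocate the contracted pair. For a general $f$, however, Lemma~\ref{lem:full} only tells you that $f$ is a \emph{linear combination} of such maps, and the kernel of a linear combination is not described by matched pairs in any transparent way; your relocation argument does not apply. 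Relatedly, your proposed uniform bound $i(m)=-\max(|m|,1)$ cannot be right as stated, since it ignores the tensor degrees $p_k,q_k$: already for $f=c_1-c_2:\mathbf{V}^{2,2}\to\mathbb{C}$ one must use at least two distinct indices for the inserted pairs.

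The paper's argument for this step is genuinely different and is where the work lies. It exploits that, by Lemma~\ref{lem:full} and \cite[Lemma~6.1]{DPS}, $f$ is equivariant for the finitary symmetric group $S$ permuting the basis $\mathcal{V}$. Fixing $m$, the subgroup $S'$ stabilizing $\{v_j:|j|\le |m|\}$ acts on weights with only finitely many orbits meeting $W'_m$; one chooses $i$ so that $(Z_{m})^\chi=(Z_{m,i})^\chi$ for a finite set of orbit representatives, and then uses $S'$-equivariance together with the operators $E_{k,l}$ (which preserve both $W'_m$ and $W_i$) in a minimal-counterexample argument to propagate equality to all weights. This is the mechanism that produces a bound on $i$ uniform in $\chi$; your sketch does not supply one.

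\medskip

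\textbf{A secondary error.} You assert several times that $W'_m$ (and hence $S_m$) and $W_{i(m)}$ are open. This is false once $p+q\ge 2$: the quotient $\mathbf{V}^{p,q}_{i-1}/\mathbf{V}^{p,q}_i$ is infinite-dimensional, so $\mathbf{V}^{p,q}_i$ has infinite codimension in $\mathbf{V}^{p,q}_{i-1}$ and cannot be open in the inductive-limit topology. Your use of openness to conclude that $W_{i(m)}+\ker f$ is closed, and later that $f(C_{i(m)})$ is open in $S$, is therefore unjustified. The first conclusion survives for a different reason (image of a linearly compact subspace in the ind-linearly compact quotient $W/\ker f$ is closed), but your topological-isomorphism argument for $f|_C$ does not. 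The paper sidesteps all of this by invoking \cite[Lemma~2.10]{EP}: in $\mathcal{I}$, closedness of the image already implies strictness, so once $f(W)$ is shown closed there is nothing further to prove.
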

        \begin{proof}
        Let $f\in\Hom_{\mathbf{T}_{\mathfrak{g}(V)}}(W, W')$ and let $Z$ denote the image of $f$. 
        By \cite[Lemma~2.10]{EP}, to show that $f$ is strict it suffices to prove that $Z$ is a closed subspace of $W'$. 
	
	Consider 
 $W_{i} := \bigoplus_{k=1}^r \mathbf{V}_{i} ^{p_k, q_k}$
  and 
  $W'_{i'} := \bigoplus_{k'=1}^{r'} \mathbf{V}_{i'} ^{p'_k, q'_k}$. 
 The topologies on $W$ and $W'$ are the inductive limit topologies on $\bigcup_{i} W_{i}$ and $\bigcup_{i'} W'_{i'}$ respectively. Thus, if $Z_{i'} := Z \cap W'_{i'}$ one has $Z = \bigcup_{i'} Z_{i'}$; moreover $Z$ is closed in $W'$ if and only if, for every $i'$, the subspace $Z_{i'}$ is closed in 
 $W'_{i'}$.
	
	Let us fix $i'$ and prove that $Z_{i'}$ is closed in $W'_{i'}$. Set $Z_{i', i}:=f(W_{i})\cap W'_{i'}$. Then $Z_{i'} = \bigcup_{i} Z_{i', i}$, and, by \cite[Lemma~2.3 $(iii)$]{EP}, the subspace $Z_{i'}$ is closed in $W'_{i'}$ if and only if there exists $i$ such that $Z_{i', i} = Z_{i'}$.
	
	We now show that indeed there is an $i$ such that $Z_{i', i} = Z_{i'}$, thus proving the proposition. By Lemma~\ref{lem: weightpart} $(i)$, the the $\chi$-weight spaces of $W_{i}$ and $W'_{i'}$ are finite dimensional for any $\chi\in\mathfrak{h}_{\mathfrak{g}(V)}^\ast$, and
	$$
	\overline{Z_{i'}}= \prod_\chi (Z_{i'})^\chi  \ \ \mathrm{and} \ \ Z_{i', i} =\prod_\chi (Z_{i', i})^\chi.
	$$

 Let us treat the case $\mathfrak{g}(V) = \mathfrak{gl}(V)$.
	By $S$ we denote the finitary symmetric group on countably many letters, i.e., $S=\bigcup_n S_n$.
	Then $S$ acts on $V$ by permuting the elements of the basis $\mathcal{V}$, and $S$ acts contragrediently on $V^\ast$. Therefore $S$ acts also on $W$ and $W'$. Lemma~\ref{lem:full} and \cite[Lemma 6.1]{DPS} imply that the map $f$ is $S$-equivariant. Observe that $W'_{i'}$ is stable under the action of the subgroup $S'$ of $S$ which  fixes pointwise the set 
$\{v_i \}$ for $|i| \leq N := |i'| $.
Furthermore, since $S$ normalizes $\mathfrak{h}_{\mathfrak{g}(V)} = \mathfrak{h}$, it follows that the action of $S'$ permutes the weights and, accordingly, the respective weight spaces. Denote $r = N + p + q$, 
and observe that the weights $\chi$ for which $(W'_{i'})^{\chi}\neq 0$ form finitely many $S'$-orbits and each orbit contains a weight of 
 $V_{[i', r]} ^{\otimes p}
     \otimes 
     (V_{[-\eta, -i']} ^{\otimes q})^*$. 
     Let $i$ be such that $(Z_{i'})^\chi = (Z_{i', i})^\chi$ for every weight $\chi$ of 
	$V_{[i', r]} ^{\otimes p}
     \otimes 
     (V_{[-r, i']} ^{\otimes q})^*$.
	
	Let us show that this implies $(Z_{i'})^\psi = (Z_{i', i})^\psi$ for any weight $\psi$, and thus ultimately $Z_{i'} = Z_{i', i}$. Indeed, for $k<l$, let $E_{k,l}\in\mathfrak{gl}(V)$ be the endomorphism sending $v_k$ to $v_l$, and sending all other basis vectors to $0$. Observe that $E_{k,l}(W'_{i'})\subset W'_{i'}$, and that $E_{k,l}(W_{j})\subset W_{j}$ for every $j$; since $f$ is $\mathfrak{gl}(V)$-equivariant, one has also $E_{k,l}(Z_{i', i})\subset Z_{i', i}$.
	
	Let us proceed by contradiction.

 Suppose $\psi = \sum a_l \varepsilon_l$ 
 is a weight for which 
	$(Z_{i'})^\psi \neq (Z_{i', i})^\psi$,
 and moreover that the number of nonzero coefficients $a_l$ with $|l| > r$ is minimal over all such weights.
  By the hypothesis on $i$, there is at least one 
    $l$ such that  $|l|> r$ and $|a_l| > 0$, and there is at least one $k$ such that $|i'| < k \leq r$ and $a_k = 0$. 
	
	Let $\sigma\in S'$ be the transposition exchanging $k$ and $l$. Let $\theta = \ \sigma ( \psi )$. By $S$-equivariance, one has $\dim(Z_{i'}^{\theta}) = \dim(Z_{i'}^{\psi})$. Furthermore, one checks that the operator $E_{k,l}^{|a_k|}$ sends  $(W'_{i'})^{\theta}$ isomorphically to 
 $(W'_{i'})^{\psi}$. Then, by the $\mathfrak{gl}(V)$-equivariance of $f$, the same operator sends isomorphically $(Z_{i', i})^{\theta}$ to $(Z_{i', i})^{\psi}$ and $(Z_{i', i})^{\theta}$ to $(Z_{i', i})^{\psi}$. Since in the decomposition of $\theta$ there appears one less $\varepsilon_l$ with $|l|>r$ we have a contradiction, and it implies $(Z_{i'})^{\theta}=(Z_{i', i})^{\theta}$ and thus also $(Z_{i'})^{\psi}=(Z_{i', i})^{\psi}$. 
 This proves that $Z_{i'} = Z_{i', i}$, and hence $f$ is strict in the case $\mathfrak{g}(V) = \mathfrak{gl}(V)$.

Let now $Q$ be a closed $\mathfrak{gl}(V)$-stable subspace of $W$. Since by Lemma~\ref{lem: weightpart} one has $Q = \overline{Q^{wt}}$, it follows that $Q$ is $S$-stable. If $Q_i := Q\cap W_i$, then $Q=\bigcup_i Q_i$ is a presentation of $Q$ as inductive limit of linearly compact closed $\mathfrak{h}_{\mathfrak{g}(V)}$-submodules. The above arguments can be repeated with $Z=f(Q)$, $Z_j = Z\cap W'_j$, $Z_{j,i}=f(Q_i)\cap Z_j$, to prove that $f_{|Q}$ has closed image. Hence $f_{|Q}$ is strict.

 The case $\mathfrak{g}(V) = \mathfrak{o}(V)$ or $\mathfrak{g}(V) = \mathfrak{sp}(V)$ is dealt with analogously. Indeed in this case, the group $S$ acts on $U$ by permuting the basis elements $\ldots, u_{-2}, u_{-1}$, and contragrediently on $U^\ast$ permuting accordingly the topological basis $u_1, u_2 , \ldots$. Hence $S$ acts on $V = U\oplus U^\ast$ in a $\mathfrak{g}(V)$-equivariant way, and ultimately on $W$ and $W'$. Similarly to the case $\mathfrak{g}(V) = \mathfrak{gl}(V)$, results in \cite[Section 6]{DPS} show that $f$ is $S$-equivariant. Let $S'$ be the subgroup fixing the vectors $v_{-N}, \ldots, v_N$ for $N = |i'|$. One uses again the $S'$-action on the weights of $\mathfrak{h}(V)$ and on the various weight spaces to show that $Z_{i'} = Z_{i', i}$ for an appropriate $i$. One must only  substitute $E_{k,l}$ with $E_{k,l}-E_{-k, -l}$, $0<k <l$, for $\mathfrak{g}(V) = \mathfrak{o}(V)$, and with $E_{k,l}+E_{-k, -l}$, $0<k <l$, for $\mathfrak{g}(V) = \mathfrak{sp}(V)$.       
        \end{proof}

        \begin{prop}\label{prop:sub}
	Every object of $\mathbf{T}_{\mathfrak{g}(V)}$ is isomorphic to a subobject of a finite direct sum of $\mathbf{V}^{p,q}$-s.
	\end{prop}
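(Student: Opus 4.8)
The plan is to reduce the statement to the corresponding, already known, fact about the category $\mathbb{T}_{\mathfrak{g}(\infty)}$ by means of the weight-part functor, and then to lift the resulting embedding back to the topological setting using Lemma~\ref{lem:full} and the strictness statement Proposition~\ref{prop:strict}. Write the given object as $W_0 \cong Z/Q$ with $Q \subseteq Z$ closed $\mathfrak{g}(V)$-stable subspaces of $W := \bigoplus_{k=1}^r \mathbf{V}^{p_k,q_k}$. By Lemma~\ref{T-split} (as in the proof that $(\phantom{a})^{wt}$ is a functor) we have $W_0^{wt} \cong Z^{wt}/Q^{wt}$, an object of $\mathbb{T}_{\mathfrak{g}(\infty)}$ which is a subquotient of the finite-length module $W^{wt} = \bigoplus_k (\mathbf{V}^{p_k,q_k})^{wt}$. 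By the structure theory of tensor modules of $\mathfrak{g}(\infty)$ (injectivity of the mixed tensor modules $(\mathbf{V}^{p,q})^{wt}$ and the description of injective hulls of simple objects, \cite[Section~6]{DPS} and its orthogonal/symplectic analogues), there is a monomorphism $\iota\colon W_0^{wt} \hookrightarrow N^{wt}$ in $\mathbb{T}_{\mathfrak{g}(\infty)}$, where $N^{wt} := \bigoplus_{l=1}^{s} (\mathbf{V}^{p'_l,q'_l})^{wt}$.

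Next I would lift $\iota$. Let $\pi\colon Z^{wt} \twoheadrightarrow W_0^{wt} \xrightarrow{\,\iota\,} N^{wt}$. Using injectivity of $N^{wt}$ in $\mathbb{T}_{\mathfrak{g}(\infty)}$, extend $\pi$ along the inclusion $Z^{wt} \hookrightarrow W^{wt}$ to a $\mathfrak{g}(\infty)$-morphism $\widehat{\pi}\colon W^{wt} \to N^{wt}$. Applying Lemma~\ref{lem:full} to each pair of summands, $\widehat{\pi} = g^{wt}$ for a unique continuous $\mathfrak{g}(V)$-morphism $g\colon W \to N := \bigoplus_l \mathbf{V}^{p'_l,q'_l}$. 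Set $\widetilde{\pi} := g_{|Z} \in \Hom_{\mathbf{T}_{\mathfrak{g}(V)}}(Z, N)$; this is strict by Proposition~\ref{prop:strict} (the clause on restrictions to closed $\mathfrak{g}(V)$-stable subspaces). Since a weight vector of $Z$ on which $\widetilde{\pi}$ vanishes already lies in $\ker\widetilde{\pi}$, one has $(\widetilde{\pi})^{wt} = \pi$; in particular $\widetilde{\pi}(Q^{wt}) = \pi(Q^{wt}) = 0$. As $Q = \overline{Q^{wt}}$ by Lemma~\ref{lem: weightpart}$(ii)$ and $\widetilde{\pi}$ is continuous, $Q \subseteq \ker\widetilde{\pi}$, so $\widetilde{\pi}$ descends to a morphism $\overline{\pi}\colon W_0 = Z/Q \to N$ in $\mathbf{T}_{\mathfrak{g}(V)}$.

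It remains to see that $\overline{\pi}$ is a monomorphism, i.e. that $\ker\widetilde{\pi} = Q$. The subspace $\ker\widetilde{\pi}$ is closed and $\mathfrak{g}(V)$-stable, and its weight part is $(\ker\widetilde{\pi})^{wt} = \ker\bigl((\widetilde{\pi})^{wt}\bigr) = \ker\pi = Q^{wt}$, the last equality using injectivity of $\iota$. By the order-preserving bijection of Lemma~\ref{lem: weightpart}$(ii)$ it follows that $\ker\widetilde{\pi} = \overline{Q^{wt}} = Q$, so $\overline{\pi}$ is injective. Since $\widetilde{\pi}$ is strict with closed image $\widetilde{\pi}(Z)$, the induced map $\overline{\pi}\colon Z/Q \xrightarrow{\ \sim\ } \widetilde{\pi}(Z) \hookrightarrow N$ is a strict monomorphism, that is, a topological isomorphism of $W_0$ onto the closed $\mathfrak{g}(V)$-stable subspace $\widetilde{\pi}(Z)$ of $N = \bigoplus_l \mathbf{V}^{p'_l,q'_l}$. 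This exhibits $W_0$ as a subobject of a finite direct sum of $\mathbf{V}^{p,q}$-s.

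The only nontrivial input is the first step: that every object of $\mathbb{T}_{\mathfrak{g}(\infty)}$, which by Definition~\ref{tgvw} is merely a subquotient of a finite direct sum of mixed tensor modules, actually embeds into a (possibly different) finite direct sum of mixed tensor modules. For $\mathfrak{g}(\infty) = \mathfrak{gl}(\infty)$ this is a consequence of the injectivity of the $(\mathbf{V}^{p,q})^{wt}$ together with the fact that the injective hull of each simple tensor module is a direct summand of some $(\mathbf{V}^{p,q})^{wt}$, both established in \cite{DPS}; the analogous facts for $\mathfrak{o}(\infty)$ and $\mathfrak{sp}(\infty)$ are used in the remaining cases. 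Everything else is the bookkeeping of how the functor $(\phantom{a})^{wt}$ interacts with kernels, images and closures, which is controlled by Lemma~\ref{lem: weightpart} and by the strictness provided by Proposition~\ref{prop:strict}.
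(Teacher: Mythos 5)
Your proof is correct and follows essentially the same route as the paper: pass to weight parts, embed $W_0^{wt}$ into a finite direct sum of mixed tensor modules using the injectivity results of \cite{DPS}, lift the resulting map to the topological setting via Lemma~\ref{lem:full}, and use the strictness of restrictions from Proposition~\ref{prop:strict} to get a closed image and hence a topological isomorphism onto a closed $\mathfrak{g}(V)$-stable subspace. The only cosmetic difference is at the end: where you identify $\ker\widetilde{\pi}=Q$ through the weight-part/closure bijection of Lemma~\ref{lem: weightpart}$(ii)$ and invoke strictness of $\widetilde{\pi}$, the paper instead notes that $f$ induces a bijective continuous map from $W_0$ onto the closure of the image of the weight-level embedding and cites \cite[Lemma~2.7(a)]{EP} to conclude it is a topological isomorphism.
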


        \begin{proof}
            Let $W = Z/Q$ where $Q\subset Z$ are closed $\mathfrak{g}(V)$-equivariant subspaces of a finite direct sum $M = \bigoplus_j \mathbf{V}^{p_i,q_i} $. It follows from Lemma~\ref{T-split} that $W^{wt}\cong Z^{wt}/Q^{wt}$. Furthermore, by \cite[Proposition 4.5]{DPS}, any object of $\mathbb{T}_{\mathfrak{g}(\infty)}$ is isomorphic to a subobject of an injective object of the form $\bigoplus_j V^{r_j,s_j} $. Let $\varphi: W^{wt}\rightarrow A$ be a $\mathfrak{g}(V)$-equivariant isomorphism, where $A$ is a $\mathfrak{g}(V)$-stable subspace of 
	$( M' ) ^{wt}= \bigoplus_j V^{r_j,s_j} $  for $M' = \bigoplus_j \mathbf{V}^{r_j,s_j} $. Let $Z'$ be the closure of $A$ in $M'$. Moreover, by the injectivity of $( M' ) ^{wt} $ and Lemma~\ref{lem:full}, it follows that $\varphi$ is the restriction of a $\mathfrak{g}(V)$-equivariant map $\Tilde{\varphi}: M^{wt} \rightarrow (M')^{wt}$ which extends to a 
	$\mathfrak{g}(V)$-equivariant map $f: M \rightarrow M'$. By Proposition~\ref{prop:strict}, the subspace $f(Z)$ is closed and is thus equal to $Z'$. Now $f$ induces a bijective continuous linear map between $W$ and $Z'$, which is a topological isomorphism by \cite[Lemma~2.7(a)]{EP}. This proves the proposition.
        \end{proof}

        \begin{defn}
		The objects of the category $\widehat{{\mathbf T}}_{\mathfrak{g}(V)}$ are topological vector spaces of the form $Z/Q$, where $Z$ is a closed $\mathfrak{g}(V)$-stable subspace of a finite direct sum of topological vector spaces of the form $\widehat{\mathbf{V}}^{p,q}$, and $Q$ is a closed $\mathfrak{g}(V)$-stable subspace of $Z$. The morphisms in the category $\widehat{\mathbf{T}}_{\mathfrak{g}(V)}$ are $\mathfrak{g}(V)$-equivariant continuous linear maps.
	\end{defn}

        \begin{thm}\label{thm:main}
	The following statements hold:
	\begin{enumerate}[(i)]
	    \item \label{uno}
	    The categories $\mathbf{T}_{\mathfrak{g}(V)}$ and $\widehat{{\mathbf T}}_{\mathfrak{g}(V)}$ are symmetric monoidal abelian subcategories of $\mathcal{I}$ and $\mathcal{P}$ respectively.
	    \item \label{due}
	    The duality between $\mathcal{I}$ and $\mathcal{P}$ restricts to a duality between $\mathbf{T}_{\mathfrak{g}(V)}$ and $\widehat{{\mathbf T}}_{\mathfrak{g}(V)}$.
	    \item \label{tre}
	    The weight part functor
	    $$
	    (\phantom{a})^{wt}: \mathbf{T}_{\mathfrak{g}(V)} \rightarrow \mathbb{T}_{\mathfrak{g}(\infty)}
	    $$
	    is an equivalence of symmetric monoidal abelian categories.
	\end{enumerate}
	\end{thm}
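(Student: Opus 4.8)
The plan is to prove statement~(\ref{tre}) first, and then to derive (\ref{uno}) and (\ref{due}) from it together with Proposition~\ref{prop:IP}, Proposition~\ref{prop:strict} and Proposition~\ref{prop:sub}. Throughout, $\mathfrak{g}(\infty)$ is dense in $\mathfrak{g}(V)$ and the $\mathfrak{g}(V)$-action on every $\mathbf{V}^{p,q}$ is continuous, so the closure of a $\mathfrak{g}(\infty)$-stable subspace is $\mathfrak{g}(V)$-stable; this, combined with the bijection of Lemma~\ref{lem: weightpart}(\ref{bij}) (which extends verbatim to finite direct sums of the $\mathbf{V}^{p,q}$) and the $\mathfrak{h}_{\mathfrak{g}(V)}$-splitting of Lemma~\ref{T-split}, is the dictionary translating between $\mathbf{T}_{\mathfrak{g}(V)}$ and $\mathbb{T}_{\mathfrak{g}(\infty)}$.

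For (\ref{tre}): \emph{faithfulness} is immediate, since $W^{wt}$ is dense in $W$ by Lemma~\ref{lem: weightpart}(\ref{bij}) and a continuous morphism is determined by its restriction to a dense subspace. \emph{Essential surjectivity}: given $Z'/Q'$ in $\mathbb{T}_{\mathfrak{g}(\infty)}$ with $Q'\subseteq Z'\subseteq\bigoplus_i(\mathbf{V}^{p_i,q_i})^{wt}$, one takes closures $\overline{Q'}\subseteq\overline{Z'}$ inside $\bigoplus_i\mathbf{V}^{p_i,q_i}$; by the remarks above these are closed $\mathfrak{g}(V)$-stable subspaces with weight parts $Q'$ and $Z'$, so $\overline{Z'}/\overline{Q'}$ is an object of $\mathbf{T}_{\mathfrak{g}(V)}$ with $(\overline{Z'}/\overline{Q'})^{wt}\cong Z'/Q'$ by Lemma~\ref{T-split}, exactly as in the proof that $(\phantom{a})^{wt}$ is a functor. \emph{Fullness}: given objects $W_1=Z_1/Q_1$, $W_2=Z_2/Q_2$ and a morphism $g\colon W_1^{wt}\to W_2^{wt}$, I would use Proposition~\ref{prop:sub} to embed $W_2$ as a closed $\mathfrak{g}(V)$-stable subspace of a finite sum $M_2$ of $\mathbf{V}^{r,s}$; composing $g$ with $W_2^{wt}\hookrightarrow M_2^{wt}$ and with $Z_1^{wt}\twoheadrightarrow W_1^{wt}$ and using that $M_2^{wt}=\bigoplus_jV^{r_j,s_j}$ is injective in $\mathbb{T}_{\mathfrak{g}(\infty)}$ (\cite[Proposition~4.5]{DPS}), one extends to $h\colon M_1^{wt}\to M_2^{wt}$ with $Z_1\subseteq M_1$ a finite sum of $\mathbf{V}^{p,q}$; by Lemma~\ref{lem:full} (applied componentwise), $h=f^{wt}$ for a continuous $\mathfrak{g}(V)$-equivariant $f\colon M_1\to M_2$; by Proposition~\ref{prop:strict}, $f$ and $f_{|Z_1}$ are strict, so $f(Z_1)$ is closed, $f$ vanishes on $Q_1=\overline{Q_1^{wt}}$ since it vanishes on $Q_1^{wt}$, hence $f$ descends to $\bar f\colon W_1\to M_2$ with closed image and $\bar f^{wt}=g$; as $g$ lands in $W_2^{wt}$ and $W_2=\overline{W_2^{wt}}$, $\bar f$ factors through $W_2$, giving the desired preimage of $g$. \emph{Monoidality}: from $\mathbf{V}^{p,q}\,\widehat{\otimes}^\ast\,\mathbf{V}^{p',q'}\cong\mathbf{V}^{p+p',q+q'}$ and $(\mathbf{V}^{p,q})^{wt}\otimes(\mathbf{V}^{p',q'})^{wt}=V^{p+p',q+q'}$ one gets that the natural map $W_1^{wt}\otimes W_2^{wt}\to(W_1\,\widehat{\otimes}^\ast\,W_2)^{wt}$ is an isomorphism on the $\mathbf{V}^{p,q}$, hence on all objects by exactness of $\widehat{\otimes}^\ast$ on $\mathcal{I}$ and of $(\phantom{a})^{wt}$; it is clearly compatible with the symmetry, so $(\phantom{a})^{wt}$ is an equivalence of symmetric monoidal categories.

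For (\ref{uno}): the objects of $\mathbf{T}_{\mathfrak{g}(V)}$ are subquotients by closed subspaces of objects of $\mathcal{I}$, hence lie in $\mathcal{I}$; the unit $\mathbf{V}^{0,0}=\mathbb{C}$ is an object; closure under $\widehat{\otimes}^\ast$ follows from $\mathbf{V}^{p,q}\,\widehat{\otimes}^\ast\,\mathbf{V}^{p',q'}=\mathbf{V}^{p+p',q+q'}$ together with exactness of $\widehat{\otimes}^\ast$ (so a tensor product of closed-subquotients is a closed-subquotient); the symmetry and associativity constraints are inherited from $\mathcal{I}$. By Proposition~\ref{prop:strict} every morphism of $\mathbf{T}_{\mathfrak{g}(V)}$ is strict, so its kernel, image and cokernel formed in $\mathcal{I}$ are again closed-subquotients of finite sums of $\mathbf{V}^{p,q}$, i.e. objects of $\mathbf{T}_{\mathfrak{g}(V)}$; since a quasi-abelian category in which every morphism is strict is abelian, $\mathbf{T}_{\mathfrak{g}(V)}$ is an abelian subcategory of $\mathcal{I}$ --- which, with the previous paragraph and the fact that $\mathbb{T}_{\mathfrak{g}(\infty)}$ is abelian, upgrades (\ref{tre}) to an equivalence of symmetric monoidal abelian categories. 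The same argument with $\widehat{\mathbf{V}}^{p,q}$, $\widehat{\otimes}^!$, $\mathcal{P}$ in place of $\mathbf{V}^{p,q}$, $\widehat{\otimes}^\ast$, $\mathcal{I}$ handles $\widehat{\mathbf{T}}_{\mathfrak{g}(V)}$, the needed strictness coming by dualizing Proposition~\ref{prop:strict}. For (\ref{due}): by Proposition~\ref{prop:IP}(v) the continuous dual is an anti-equivalence $\mathcal{I}\to\mathcal{P}$, it sends $\mathbf{V}^{p,q}$ to $\widehat{\mathbf{V}}^{q,p}$ as $\mathfrak{g}(V)$-modules for the contragredient action, it turns closed subspaces into quotients and conversely, and it preserves $\mathfrak{g}(V)$-equivariance; hence it carries a closed-subquotient of $\bigoplus_i\mathbf{V}^{p_i,q_i}$ to a closed-subquotient of $\bigoplus_i\widehat{\mathbf{V}}^{q_i,p_i}$ and restricts to an anti-equivalence $\mathbf{T}_{\mathfrak{g}(V)}\to\widehat{\mathbf{T}}_{\mathfrak{g}(V)}$.

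The main obstacle is the fullness of $(\phantom{a})^{wt}$ on arbitrary objects: this is where one genuinely needs to combine the injectivity of $\bigoplus_jV^{r_j,s_j}$ in $\mathbb{T}_{\mathfrak{g}(\infty)}$, the embedding statement of Proposition~\ref{prop:sub}, and --- most essentially --- the strictness of Proposition~\ref{prop:strict}, which is what forces the lifted map to have closed image and hence to descend to an honest morphism of $\mathbf{T}_{\mathfrak{g}(V)}$ with the prescribed weight part. Everything else --- essential surjectivity, monoidality, the abelian structure, and the duality --- is formal once Propositions~\ref{prop:strict} and \ref{prop:sub} are available.
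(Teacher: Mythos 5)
Your overall strategy coincides with the paper's: faithfulness from density of the weight part, fullness from the combination of Proposition~\ref{prop:sub}, the injectivity of $\bigoplus_j V^{r_j,s_j}$ in $\mathbb{T}_{\mathfrak{g}(\infty)}$ (\cite[Proposition~4.5]{DPS}), Lemma~\ref{lem:full} and the strictness of Proposition~\ref{prop:strict}, abelianness of $\mathbf{T}_{\mathfrak{g}(V)}$ from ``every morphism is strict'' after reducing to closed submodules of finite sums of $\mathbf{V}^{p,q}$, and statement~(\ref{due}) by restricting the duality of Proposition~\ref{prop:IP}$(v)$ using that equivariance is preserved under taking continuous duals. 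Your fullness argument (embed $W_2$ into $M_2$, extend over $Z_1^{wt}\hookrightarrow M_1^{wt}$ by injectivity, rigidify by Lemma~\ref{lem:full}, then descend using strictness and $Q_1=\overline{Q_1^{wt}}$) is a sound repackaging of the paper's Hom-diagram argument, and your treatment of monoidality is at least as explicit as the paper's.

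The one place where you genuinely deviate is essential surjectivity, and there your argument has an unjustified step. You take closures $\overline{Q'}\subseteq\overline{Z'}$ inside $\bigoplus_i\mathbf{V}^{p_i,q_i}$ and claim they are $\mathfrak{g}(V)$-stable because ``$\mathfrak{g}(\infty)$ is dense in $\mathfrak{g}(V)$ and the $\mathfrak{g}(V)$-action on every $\mathbf{V}^{p,q}$ is continuous.'' Lemma~\ref{lem: weightpart}(\ref{bij}) only gives $\mathfrak{h}_{\mathfrak{g}(V)}$-stability of the closure; to promote $\mathfrak{g}(\infty)$-stability of $Z'$ to $\mathfrak{g}(V)$-stability of $\overline{Z'}$ you need, beyond continuity of each operator, continuity of the orbit maps $X\mapsto Xw$ on $\mathfrak{g}(V)$ together with density of $\mathfrak{g}(\infty)$ — neither of which is established in the paper (the topology on $\mathfrak{g}(V)$ is never pinned down beyond it being a Lie algebra in $\mathcal{P}$, and density is only asserted in passing). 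The claim is plausible, but as written it is a gap. The paper avoids it entirely: given $A$ in $\mathbb{T}_{\mathfrak{g}(\infty)}$, it writes $A=\ker\phi$ for $\phi\in\Hom_{\mathbb{T}_{\mathfrak{g}(\infty)}}((M'')^{wt},(M''')^{wt})$ (using \cite[Proposition~4.5]{DPS} twice), extends $\phi$ to $\psi\in\Hom_{\mathbf{T}_{\mathfrak{g}(V)}}(M'',M''')$ by Lemma~\ref{lem:full}, and observes $(\ker\psi)^{wt}=A$; the relevant closed subspace is then automatically closed and $\mathfrak{g}(V)$-stable because it is the kernel of a continuous equivariant map. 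You should either adopt this kernel argument or supply a proof of the continuity-plus-density claim; with that repair the rest of your proposal goes through.
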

        \begin{proof}
        The fact that the categories $\mathbf{T}_{\mathfrak{g}(V)}$ and $\widehat{{\mathbf T}}_{\mathfrak{g}(V)}$ are symmetric monoidal categories is a consequence of the isomorphisms 
        \[
        \mathbf{V}^{p,q}\widehat{\otimes}^\ast \mathbf{V}^{p',q'} \cong \mathbf{V}^{p+p',q+q'} \ \mathrm{and}\ \ \widehat{\mathbf{V}}^{p,q}\widehat{\otimes}^! \widehat{\mathbf{V}}^{p',q'} \cong \widehat{\mathbf{V}}^{p+p',q+q'} \ .
        \]
            From the description in \cite[Prop.~2.9]{EP} of kernels and cokernels in $\mathcal{I}$ and $\mathcal{P}$, it follows that the categories $\mathbf{T}_{\mathfrak{g}(V)}$ and $\widehat{{\mathbf T}}_{\mathfrak{g}(V)}$ are closed under taking kernels and cokernels, respectively in $\mathcal{I}$ and $\mathcal{P}$. Thus $\mathbf{T}_{\mathfrak{g}(V)}$ and $\widehat{{\mathbf T}}_{\mathfrak{g}(V)}$ inherit the quasi-abelian structure.
	Furthermore, a linear map $f$ is $\mathfrak{g}(V)$-equivariant if and only if its dual $f^\ast$ is $\mathfrak{g}(V)$-equivariant. This implies that the duality stated in Proposition~\ref{prop:IP} restricts to a duality between the quasi-abelian categories $\mathbf{T}_{\mathfrak{g}(V)}$ and $\widehat{{\mathbf T}}_{\mathfrak{g}(V)}$, and hence proves 
	($\ref{due}$).
	
	To prove ($\ref{uno}$), observe that a quasi-abelian category is abelian if and only if every morphism is strict (see \cite[Remark~4.7 and Remark~4.9]{Buh}). By duality, it suffices to prove that every morphism in $\mathbf{T}_{\mathfrak{g}(V)}$ is strict. 
	
	Let $f:W\rightarrow W'$ be a morphism in $\mathbf{T}_{\mathfrak{g}(V)}$. By Proposition~\ref{prop:sub}, one may suppose that $W$ and $W'$ are submodules of respective modules $M$ and $M'$ which are finite direct sums of $\mathbf{V}^{p,q}$-s. By \cite[Proposition~4.5]{DPS} and Lemma~\ref{lem:full}, one gets that the $\mathfrak{g}(V)$-equivariant continuous linear maps from $W$ to $W'$ are restrictions of $\mathfrak{g}(V)$-equivariant continuous linear maps from $M$ to $M'$, and such restrictions are strict by Proposition~\ref{prop:strict}. Thus ($\ref{uno}$) is proved.
	
	Let us prove ($\ref{tre}$). We keep the notations $W, W', M, M'$ from $(i)$. Then one has a commutative diagram of linear operators
	\[
 \begin{CD}
	\Hom_{\mathbf{T}_{\mathfrak{g}(V)}}(M, M')@>{a}>> \Hom_{\mathbf{T}_{\mathfrak{g}(V)}}(W, M') \\
	@V{c}VV @V{d}VV \\
	\Hom_{\mathbb{T}_{\mathfrak{g}(\infty)}}(M^{wt}, (M')^{wt})@>{b}>> \Hom_{\mathbb{T}_{\mathfrak{g}(\infty)}}(W^{wt}, (M')^{wt}) \ .
	\end{CD} 
    \]

	The map $c$ is bijective by Lemma~\ref{lem:full}; the map $b$ is surjective by injectivity of $(M')^{wt}$ (\cite[Proposition~4.5]{DPS}). It follows that $d$ is surjective.
	Since $W^{wt}$ is dense in $W$, the map $d$ is also injective, and hence $d$ is an isomorphism.
	
	Assume $W'$  is the kernel of a homomorphism
	$g\in \Hom_{\mathbf{T}_{\mathfrak{g}(V)}}(M', N')$, where $N'$ is a finite direct sum of $\mathbf{V}^{p,q}$-s. This leads to the commutative diagram
	\[
        \begin{CD}
	0 @>>>\Hom_{\mathbf{T}_{\mathfrak{g}(V)}}(W, W')@>>> \Hom_{\mathbf{T}_{\mathfrak{g}(V)}}(W, M') @>>> \Hom_{\mathbf{T}_{\mathfrak{g}(V)}}(W, N')\\
	@. @VVV @V{d}VV  @V{\cong}VV\\
	0 @>>>\Hom_{\mathbb{T}_{\mathfrak{g}(\infty)}}(W^{wt}, (W')^{wt})@>>> \Hom_{\mathbb{T}_{\mathfrak{g}(\infty)}}(W^{wt}, (M')^{wt}) @>>> \Hom_{\mathbb{T}_{\mathfrak{g}(\infty)}}(W^{wt}, (N')^{wt})
	\end{CD}
    \]
	where the rows are exact and the middle vertical arrow is the map $d$ from above. We have established that $d$ is an isomorphism, and in the same way one can establish that the third vertical arrow is an isomorphism. It thus follows that the first vertical arrow is also an isomorphism. This proves that the functor $(\phantom{a})^{wt}$ is fully faithful. 
	
	Let us prove the essential surjectivity of the functor $(\phantom{a})^{wt}$. Let $A$ be an object of $\mathbb{T}_{\mathfrak{g}(\infty)}$. By \cite[Proposition~4.5]{DPS}, $A$ may be assumed to be the kernel of a homomorphism $\phi\in \Hom_{\mathbb{T}_{\mathfrak{g}(\infty)}}((M'')^{wt}, (M''')^{wt})$, where $M'', M'''$ are finite direct sums of $\mathbf{V}^{p,q}$-s. By Lemma~\ref{lem:full}, the map $\phi$ is the restriction of a unique map $\psi\in \Hom_{\mathbf{T}_{\mathfrak{g}(V)}}(M'', M''')$. One has then $(\mathrm{ker}\psi)^{wt}=A$; thus the functor $(\phantom{a})^{wt}$ is essentially surjective. This concludes the proof of (\ref{tre}).
        \end{proof}

        \begin{cor} \label{quot}
	    Every object of $\widehat{\mathbf{T}}_{\mathfrak{g}(V)}$ is isomorphic to a quotient of a finite direct sum of $\widehat{\mathbf{V}}^{p,q}$-s.
	\end{cor}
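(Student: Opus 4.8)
The plan is to obtain the statement as the dual of Proposition~\ref{prop:sub}, exploiting the duality between $\mathbf{T}_{\mathfrak{g}(V)}$ and $\widehat{{\mathbf T}}_{\mathfrak{g}(V)}$ from Theorem~\ref{thm:main}(\ref{due}). First I would recall two ingredients already at hand: taking continuous duals sends $\mathbf{V}^{p,q}$ to $\widehat{\mathbf{V}}^{q,p}$ and conversely (by the Lemma preceding Lemma~\ref{lem:vvast}), and, being a contravariant equivalence between the \emph{abelian} categories $\mathbf{T}_{\mathfrak{g}(V)}$ and $\widehat{{\mathbf T}}_{\mathfrak{g}(V)}$ (Theorem~\ref{thm:main}(\ref{uno}), (\ref{due})), it carries monomorphisms to epimorphisms, preserves finite direct sums, and satisfies $W^{\ast\ast}\cong W$ naturally in $W$ --- this last point being the reflexivity underlying the anti-equivalence $\mathcal{I}\leftrightarrow\mathcal{P}$ of Proposition~\ref{prop:IP}(v).

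Then, given an object $W$ of $\widehat{{\mathbf T}}_{\mathfrak{g}(V)}$, I would pass to its continuous dual $W^\ast$, which is an object of $\mathbf{T}_{\mathfrak{g}(V)}$. By Proposition~\ref{prop:sub} there is a monomorphism $\iota\colon W^\ast \hookrightarrow \bigoplus_{k=1}^{r}\mathbf{V}^{p_k,q_k}$ in $\mathbf{T}_{\mathfrak{g}(V)}$, i.e. $W^\ast$ is isomorphic to a closed $\mathfrak{g}(V)$-stable subspace of a finite direct sum of $\mathbf{V}^{p,q}$-s. Dualizing $\iota$ produces an epimorphism
\[
\Bigl(\bigoplus_{k=1}^{r}\mathbf{V}^{p_k,q_k}\Bigr)^{\!\ast}=\bigoplus_{k=1}^{r}\widehat{\mathbf{V}}^{q_k,p_k}\twoheadrightarrow W^{\ast\ast}\cong W
\]
in $\widehat{{\mathbf T}}_{\mathfrak{g}(V)}$, exhibiting $W$ as a quotient of a finite direct sum of spaces $\widehat{\mathbf{V}}^{p,q}$, as claimed.

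I do not expect a genuine obstacle, since all the analytic content has already been absorbed into Proposition~\ref{prop:sub} and Theorem~\ref{thm:main}. The only point to be careful about is that the duality really interchanges subobjects and quotients: this is where abelianness matters, because in a merely quasi-abelian category a contravariant equivalence need not turn strict monomorphisms into strict epimorphisms. Since every morphism of $\mathbf{T}_{\mathfrak{g}(V)}$ and of $\widehat{{\mathbf T}}_{\mathfrak{g}(V)}$ is strict (Proposition~\ref{prop:strict} and Theorem~\ref{thm:main}(\ref{uno})), monos and epis behave exactly as in an abelian category, and the dualization argument goes through verbatim.
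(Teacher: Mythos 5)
Your proposal is correct and follows exactly the paper's own route: the paper proves Corollary~\ref{quot} by combining the duality of Theorem~\ref{thm:main}(\ref{due}) with Proposition~\ref{prop:sub}, which is precisely your dualization of the embedding $W^\ast\hookrightarrow\bigoplus_k\mathbf{V}^{p_k,q_k}$. Your added remarks on reflexivity, on $(\mathbf{V}^{p,q})^\ast\cong\widehat{\mathbf{V}}^{q,p}$, and on strictness guaranteeing that the anti-equivalence exchanges subobjects and quotients simply spell out details the paper leaves implicit.
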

        \begin{proof}
            Follows from Theorem~\ref{thm:main}(\ref{due}) and Proposition~\ref{prop:sub}.
        \end{proof}
	
	\section{A supersymmetric setting}

 We conclude the paper by a supersymmetric extension of Theorem~\ref{thm:main}. Let's start by introducing supersymmetric analogues of the categories $\mathbb{T}_{\mathfrak{gl}(\infty)}$, $\mathbf{T}_{\mathfrak{gl}(V)}$ and $\widehat{{\mathbf T}}_{\mathfrak{gl}(V)}$.

By $\Pi$ we denote the functor on the category of vector superspaces ($\mathbb{Z}/2\mathbb{Z}$-graded vector spaces) which changes the  $\mathbb{Z}/2\mathbb{Z}$-grading to the opposite one. Considering $V$ as a purely even superspace (i.e. $V_0 = V$, $V_1 = 0$), we introduce the topological superspace $V\oplus \Pi V$. This is a self-dual Tate superspace as $(V\oplus \Pi V)^\ast = V^\ast \oplus \Pi V^\ast = V \oplus \Pi V$. The Lie superalgebra of continuous endomorphisms of $V\oplus \Pi V$ is denoted $\mathfrak{gl}(V\oplus \Pi V)$. 
The Lie algebra $\mathfrak{gl}(V \oplus \Pi V)_0$, the even part of $\mathfrak{gl}(V \oplus \Pi V)$, equals $\mathfrak{gl}(V)\oplus \mathfrak{gl}(\Pi V)$. Note that $\mathfrak{gl}(\Pi V) = \mathfrak{gl}(V)$, i.e., $\mathfrak{gl}(V \oplus \Pi V)_0 = \mathfrak{gl}(V)\oplus \mathfrak{gl}(V)$.

 Let now $U \oplus \Pi U$ be the superspace with $(U \oplus \Pi U)_0 = U$ and $(U \oplus \Pi U)_1 = U$. Choosing a basis $\{u^0 _i\}$ in $U$ endows $U\oplus\Pi U$ with a basis $\{u^0 _i, u^1 _i\}$, where $u^1 _i$ is the vector $u^0 _i$ considered as an element of $\Pi U$.
 Setting then $U_\ast \oplus \Pi U_\ast = \mathrm{span}\{ (u_i ^0)^\ast ,  (u_i ^1)^\ast  \}$, where $\{(u_i ^0)^\ast\}$ is the system dual to $\{u^0 _i\}$ and $\{(u_i ^1)^\ast\}$  is the system dual to $\{u^1 _i\}$, we obtain a nondegenerate pairing 
 \[
 (U \oplus \Pi U) \times (U_\ast \oplus \Pi U_\ast) \rightarrow \mathbb{C}
 \]
 which respects the $\mathbb{Z}/2\mathbb{Z}$-grading.
 The tensor product 
 \[
 (U \oplus \Pi U) \otimes (U_\ast \oplus \Pi U_\ast)
 \]
is therefore an associative superalgebra ($\mathbb{Z}/2\mathbb{Z}$-graded associative algebra), and $\mathfrak{gl}(\infty | \infty)$ is by definition the Lie superalgebra associated with this superalgebra.	
Note that $\mathfrak{gl}(\infty | \infty)$ is a Lie subsuperalgebra of $\mathfrak{gl}(V \oplus \Pi V)$.

Similarly, $V\oplus \Pi V$ is endowed with a supersymmetric pairing
\[
AB: (V \oplus \Pi V) \times (V \oplus \Pi V) \rightarrow \mathbb{C}
\]
given by $AB((x,y), (z,t)) = A((x,z)) + B((y,t))$,
where $x,z\in V$, $y,t\in \Pi V$ and we identify $V$ and $\Pi V$ when taking $B((y,t))$.
The Lie subsuperalgebra
\[
\mathfrak{osp}(V \oplus \Pi V) \subset \mathfrak{gl}(V \oplus \Pi V)
\]
is the Lie subsuperalgebra generated by $\mathbb{Z}/2\mathbb{Z}$-homogeneous operators $\varphi \in \mathfrak{gl}(V \oplus \Pi V)$ of parity $\overline{\varphi}\in \mathbb{Z}/2\mathbb{Z}$, such that
\[
AB(\varphi((x,y)), (z,t)) + (-1)^{\overline{\varphi}}
AB((x,y), \varphi((z,t))) = 0 \ .
\]
The Lie algebra $\mathfrak{osp}(V \oplus \Pi V)_0$ is isomorphic to 
$\mathfrak{o}(V)\oplus \mathfrak{sp}(V)$.

The definitions of the categories $\mathbb{T}_{\mathfrak{gl}(\infty)}$, $\mathbf{T}_{\mathfrak{gl}(V)}$ and $\widehat{\mathbf{T}}_{\mathfrak{gl}(V)}$ extend in an obvious way to definitions of categories $\mathbb{T}_{\mathfrak{gl}(\infty | \infty)}$, $\mathbf{T}_{\mathfrak{gl}(V \oplus \Pi V)}$ and $\widehat{\mathbf{T}}_{\mathfrak{gl}(V\oplus \Pi V)}$: one replaces $V$ by $V\oplus \Pi V$
in the respective definition and works with vector superspaces instead of just vector spaces.

The following diagrams of functors emerge,
\begin{equation}
\label{eq1}
\begin{tikzcd}
    & \mathbf{T}_{\mathfrak{gl}(V \oplus \Pi V)} \arrow[dl, "i_{\Pi V}"'] \arrow[dr, "i_V"]& \\
    \mathbf{T}_{\mathfrak{gl}(V)} & & \mathbf{T}_{\mathfrak{gl}(\Pi V)}
\end{tikzcd}  
\end{equation}
and
\begin{equation}
\label{eq2}
\begin{tikzcd}
    & \widehat{\mathbf{T}}_{\mathfrak{gl}(V\oplus \Pi V)} \arrow[dl, "\widehat{i}_{\Pi V}"'] \arrow[dr, "\widehat{i}_V"]& \\
    \widehat{\mathbf{T}}_{\mathfrak{gl}(V)}& & \widehat{\mathbf{T}}_{\mathfrak{gl}(\Pi V)}
\end{tikzcd}  
\end{equation}
where $i_V$ and $\widehat{i}_V$ (respectively, $i_{\Pi V}$ and $\widehat{i}_{\Pi V}$) are the functors of taking $\mathfrak{gl}(V)$-invariants (respectively, $\mathfrak{gl}(\Pi V)$-invariants).

The weight functor $(\phantom{a})^{wt}$ transfers diagram (\ref{eq1}) to the diagram
\begin{equation}
\label{eq3}
\begin{tikzcd}
    & \mathbb{T}_{\mathfrak{gl}(\infty | \infty)} \arrow[dl, "i_{\Pi V, \infty}"'] \arrow[dr, "i_{V, \infty}"]& \\
    \mathbb{T}_{\mathfrak{gl}(\infty)} & & \mathbb{T}_{\mathfrak{gl}(\infty)} \ ,
\end{tikzcd}  
\end{equation}
where $i_{V, \infty}$ and $i_{\Pi V, \infty}$ are the respective functors of invariants with respect to the subalgebras 
$\mathfrak{gl}(\infty)\subset \mathfrak{gl}(V)$ and $\mathfrak{gl}(\infty)\subset \mathfrak{gl}(\Pi V)$.
Theorem~\ref{thm:main} asserts that the weight functor $(\phantom{a})^{wt}$ is an equivalence of the categories $\mathbf{T}_{\mathfrak{gl}(V)}$
and $\mathbb{T}_{\mathfrak{gl}(\infty)}$; we leave to the reader to check that this result extends also to the categories
$\mathbf{T}_{\mathfrak{gl}(V \oplus \Pi V)}$ and $\mathbb{T}_{\mathfrak{gl}(\infty | \infty)}$. Moreover, Serganova proves in \cite{S} that the diagram~(\ref{eq3}) consists of equivalences of monoidal categories, and this implies the existence of an equivalence of monoidal categories
\[
i_{V, \infty} \circ i_{\Pi V, \infty}^{-1}: \mathbb{T}_{\mathfrak{gl}(\infty)} \rightarrow \mathbb{T}_{\mathfrak{gl}(\infty)} \ .
\]

In this way, we arrive to
\begin{thm}\label{thm:main2}
    The diagrams (\ref{eq1}) and (\ref{eq2}) are diagrams of equivalences of monoidal categories.
\end{thm}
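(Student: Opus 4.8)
The plan is to obtain diagram~(\ref{eq1}) from Serganova's theorem via the weight functor, and then to derive diagram~(\ref{eq2}) by dualizing. By the super analogue of Theorem~\ref{thm:main}(\ref{tre}), whose verification we have left to the reader, the weight functor furnishes equivalences of symmetric monoidal abelian categories $\mathbf{T}_{\mathfrak{gl}(V\oplus\Pi V)}\simeq\mathbb{T}_{\mathfrak{gl}(\infty|\infty)}$ and $\mathbf{T}_{\mathfrak{gl}(V)}\simeq\mathbb{T}_{\mathfrak{gl}(\infty)}\simeq\mathbf{T}_{\mathfrak{gl}(\Pi V)}$. Placing these three functors as the vertical arrows between diagram~(\ref{eq1}) and diagram~(\ref{eq3}), the first thing to check is that the resulting two squares commute up to monoidal natural isomorphism, that is, $(M^{\mathfrak{gl}(V)})^{wt}\cong i_{V,\infty}(M^{wt})$ and $(M^{\mathfrak{gl}(\Pi V)})^{wt}\cong i_{\Pi V,\infty}(M^{wt})$, naturally in $M$ and compatibly with $\widehat{\otimes}^\ast$.

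By the super versions of Proposition~\ref{prop:sub}, Proposition~\ref{prop:strict} and Lemma~\ref{lem: weightpart}(\ref{bij}) (in particular the density of weight parts and the strictness of morphisms), it suffices to verify this on the generating objects $\mathbf{V}^{p,q}$ of $V\oplus\Pi V$. There it is an explicit computation: $\mathfrak{gl}(V)$ acts on $V$ as the standard module and trivially on $\Pi V$, while $\mathbf{V}^{a,b}$ for the space $V$ has no nonzero continuous $\mathfrak{gl}(V)$-invariants unless $a=b=0$ (the would-be invariant total contractions belong to the $\widehat{\otimes}^!$-completion but not to the $\widehat{\otimes}^\ast$-completion). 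Hence the $\mathfrak{gl}(V)$-invariants of $\mathbf{V}^{p,q}$ for $V\oplus\Pi V$ reduce to the ``$\Pi V$-isotypic'' summand $\mathbf{V}^{p,q}_{\Pi V}$ built only from $\Pi V$ and $\Pi V^\ast$; this summand is plainly multiplicative in $(p,q)$, and its weight part coincides with $i_{V,\infty}\bigl((\mathbf{V}^{p,q})^{wt}\bigr)$ in the picture of~\cite{S}. In passing, this shows that $i_V$ indeed takes values in $\mathbf{T}_{\mathfrak{gl}(\Pi V)}$: $(M^{\mathfrak{gl}(V)})^{wt}=i_{V,\infty}(M^{wt})$ lies in $\mathbb{T}_{\mathfrak{gl}(\infty)}$ by~\cite{S}, and $M^{\mathfrak{gl}(V)}$ is recovered as its closure by Lemma~\ref{lem: weightpart}(\ref{bij}).

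Granting this, diagram~(\ref{eq1}) consists of equivalences of monoidal categories by a two-out-of-three argument: in the commutative square whose verticals are the monoidal equivalences $(\phantom{a})^{wt}$ and whose bottom edge is $i_{V,\infty}$ --- a monoidal equivalence by~\cite{S} --- the top edge $i_V$ is isomorphic to a composite of the monoidal equivalences $(\phantom{a})^{wt}$, $i_{V,\infty}$ and a quasi-inverse of $(\phantom{a})^{wt}$, hence is itself one; the same applies to $i_{\Pi V}$. For diagram~(\ref{eq2}) I would invoke the duality $(\phantom{a})^\ast$ of Theorem~\ref{thm:main}(\ref{due}) in its super form, for each of $V$, $\Pi V$ and $V\oplus\Pi V$. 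This contravariant monoidal equivalence carries $i_V$ to $\widehat{i}_V$ and $i_{\Pi V}$ to $\widehat{i}_{\Pi V}$: taking continuous duals exchanges the functor of $\mathfrak{gl}(V)$-invariants on $\mathcal I$ with the functor of $\mathfrak{gl}(V)$-invariants on $\mathcal P$, the crux being precisely that the ``identity'' invariant of a completed tensor power belongs to the $\widehat{\otimes}^!$-completion but not to the $\widehat{\otimes}^\ast$-completion; once more this is checked on the generating objects and extended by strictness. Applying $(\phantom{a})^\ast$ to the diagram of monoidal equivalences~(\ref{eq1}) therefore produces the diagram of monoidal equivalences~(\ref{eq2}).

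The step I expect to be the main technical point is the commutativity of the two squares: since $\mathfrak{gl}(V)$ is not a ``tensor subalgebra'' of $\mathfrak{gl}(V\oplus\Pi V)$ and the invariants functor is only lax monoidal a priori, one cannot deduce purely formally that $(\phantom{a})^{wt}$ intertwines $i_V$ with $i_{V,\infty}$ as monoidal functors; the argument must rely on the explicit structure of the generating mixed-tensor modules of $V\oplus\Pi V$, where the $\mathfrak{gl}(V)$-invariants are exactly the $\Pi V$-isotypic direct summand, and on Proposition~\ref{prop:strict} to transfer the identification to arbitrary objects and their subquotients. Once this compatibility is secured, the two-out-of-three step for~(\ref{eq1}) and the transport along the duality for~(\ref{eq2}) are purely formal.
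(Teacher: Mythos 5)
Your treatment of diagram~(\ref{eq1}) is essentially the paper's own argument: the paper also deduces it from Serganova's equivalences in diagram~(\ref{eq3}), the super analogue of Theorem~\ref{thm:main}(\ref{tre}), and the compatibility of the weight functor with the invariants functors; your reduction of that compatibility to the generators and the vanishing of continuous $\mathfrak{gl}(V)$-invariants in the purely even mixed tensors $\mathbf{V}^{a,b}$ is a sensible way of filling in the detail that the paper leaves implicit, and the two-out-of-three conclusion is exactly what the paper means.

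The gap is in your duality step for diagram~(\ref{eq2}). Taking continuous duals does not exchange invariants with invariants: a $\mathfrak{gl}(V)$-invariant continuous functional on $M$ is one vanishing on the closure of $\mathfrak{gl}(V)\cdot M$, so $(\phantom{a})^\ast$ intertwines $i_V$ with the functor of (closed) \emph{coinvariants} on the hat side, not with the invariants functor $\widehat{i}_V$. Concretely, $(V\widehat{\otimes}^\ast V^\ast)^{\mathfrak{gl}(V)}=0$, whereas $(V\widehat{\otimes}^! V^\ast)^{\mathfrak{gl}(V)}$ contains the coevaluation (identity) element; the coinvariants of $V\widehat{\otimes}^\ast V^\ast$ are likewise nonzero because the contraction $V\widehat{\otimes}^\ast V^\ast\rightarrow\mathbb{C}$ factors through them. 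So the very fact you call the crux --- that the identity invariant lives only in the $\widehat{\otimes}^!$-completion --- shows that the hat-side invariants functor is \emph{not} the dual of $i_V$: applied to $(V\oplus\Pi V)\widehat{\otimes}^!(V\oplus\Pi V)^\ast$ it produces an extra trivial summand $\mathbb{C}\cdot\mathrm{id}_V$ beyond $\Pi V\widehat{\otimes}^!\Pi V^\ast$, and comparing $\Hom(\mathbb{C},-)$ before and after shows that the literal invariants functor cannot be full. Hence your asserted intertwining $(\phantom{a})^\ast\circ i_V\cong\widehat{i}_V\circ(\phantom{a})^\ast$ fails, and with it the transport of~(\ref{eq1}) to~(\ref{eq2}) as you wrote it. What duality gives immediately is that the diagram obtained from~(\ref{eq1}) by replacing $i_V$, $i_{\Pi V}$ with their duals --- the functors of maximal quotients with trivial $\mathfrak{gl}(V)$-, respectively $\mathfrak{gl}(\Pi V)$-action --- consists of monoidal equivalences; this is how the paper's one-line ``follows by duality'' must be read (the paper is itself terse on this point), and if one insists on $\widehat{i}_V$ being the honest invariants functor, an additional identification or correction of the functor is needed rather than the identification you claim.
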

\begin{proof}
    The result for diagram~(\ref{eq1}) follows from Serganova's result and from the fact that the weight functor commutes with the respective functors of invariants. For the diagram (\ref{eq2}) the result follows by duality.
\end{proof}

Next we would like to turn our attention to the Lie superalgebra $\mathfrak{osp}(V \oplus \Pi V)$. We first recall the Lie superalgebra $\mathfrak{osp}(\infty | \infty)$. This is the intersection of the Lie subsuperalgebras 
$\mathfrak{osp}(V \oplus \Pi V)$ and $(V \oplus \Pi V)\otimes (V \oplus \Pi V)$ within $\mathfrak{gl}(V \oplus \Pi V)$,
where $V\oplus \Pi V$ is endowed with the pairing $AB$. The adjoint $\mathfrak{osp}(\infty | \infty)$-module is isomorphic to the second (super)exterior power $\bigwedge^2 (V \oplus \Pi V)$ of the superspace $V\oplus \Pi V$, and $\mathfrak{osp}(\infty | \infty)_0 \cong \mathfrak{o}(\infty)\oplus \mathfrak{sp}(\infty)$,
where $\mathfrak{o}(\infty) = \varinjlim \mathfrak{o}(n)$, $\mathfrak{sp}(\infty) = \varinjlim \mathfrak{sp}(2n)$.
The category $\mathbb{T}_{\mathfrak{osp}(\infty | \infty)}$ has been introduced in \cite{S} and its objects are 
$\mathfrak{osp}(\infty | \infty)$-modules isomorphic to subquotients of finite direct sums of tensor products of the form $(V \oplus \Pi V)^{\otimes k}$ for $k \geq 0$.

Next, the objects of the category $\mathbf{T}_{\mathfrak{osp}(V \oplus \Pi V)}$ are
topological vector superspaces of the form $Z/Q$, where $Z$ is a closed $\mathfrak{osp}(V \oplus \Pi V)$-stable subspace of a direct sum of topological vector superspaces of the form $(V\oplus \Pi V)^{\widehat{\otimes}^\ast p}$ and $Q$ is a closed $\mathfrak{osp}(V \oplus \Pi V)$-stable subspace of $Z$. The morphisms in the category $\mathbf{T}_{\mathfrak{osp}(V \oplus \Pi V)}$ are $\mathfrak{osp}(V \oplus \Pi V)$-equivariant continuous linear maps. Similarly to Theorem~\ref{thm:main} one shows that the weight part functor yields an equivalence between between the monoidal categories $\mathbf{T}_{\mathfrak{osp}(V \oplus \Pi V)}$ and $\mathbb{T}_{\mathfrak{osp}(\infty | \infty)}$.

Moreover, Serganova has established in \cite{S} that the diagram 
\begin{equation}
\label{eq4}
\begin{tikzcd}
    & \mathbb{T}_{\mathfrak{osp}(\infty | \infty)} \arrow[dl, "i_{\mathfrak{sp}(\infty)}"'] \arrow[dr, "i_{\mathfrak{o}(\infty)}"]& \\
    \mathbb{T}_{\mathfrak{o}(\infty)} & & \mathbb{T}_{\mathfrak{sp}(\infty)} \ ,
\end{tikzcd}  
\end{equation}
where $i_{\mathfrak{o}(\infty)}$ and $i_{\mathfrak{sp}(\infty)}$ are the respective functors of invariants with respect to the subalgebras $\mathfrak{o}(\infty) \subset \mathfrak{o}(V)$ and $\mathfrak{sp}(\infty) \subset \mathfrak{sp}(V)$, is a diagram of equivalences of monoidal categories.

This brings us to
\begin{thm}
    The diagrams
    \begin{equation}
\label{eq5}
\begin{tikzcd}
    & \mathbf{T}_{\mathfrak{osp}(V \oplus \Pi V)} \arrow[dl, "i_{\mathfrak{sp}(V)}"'] \arrow[dr, "i_{\mathfrak{o}(V)}"]& \\
    \mathbf{T}_{\mathfrak{o}(V)} & & \mathbf{T}_{\mathfrak{sp}(V)}
\end{tikzcd}  
\end{equation}
and
\begin{equation}
\label{eq6}
\begin{tikzcd}
    & \widehat{\mathbf{T}}_{\mathfrak{osp}(V \oplus \Pi V)} \arrow[dl, "\widehat{i}_{\mathfrak{sp}(V)}"'] \arrow[dr, "\widehat{i}_{\mathfrak{o}(V)}"]& \\
    \widehat{\mathbf{T}}_{\mathfrak{o}(V)}& & \widehat{\mathbf{T}}_{\mathfrak{sp}(V)} \ ,
\end{tikzcd}  
\end{equation}
where $i_{\mathfrak{o}(V)}$ and $\ \widehat{i}_{\mathfrak{o}(V)}$ (respectively, $i_{\mathfrak{sp}(V)}$ and $\ \widehat{i}_{\mathfrak{sp}(V)}$) are the functors of taking $\mathfrak{o}(V)$-invariants (respectively, $\mathfrak{sp}(V)$-invariants), are diagrams of equivalences of monoidal categories.
\end{thm}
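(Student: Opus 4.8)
The plan is to imitate the proof of Theorem~\ref{thm:main2}: transport the diagrams along the weight part functor, invoke Serganova's result that diagram~(\ref{eq4}) consists of equivalences of monoidal categories, and then deduce diagram~(\ref{eq6}) from diagram~(\ref{eq5}) by duality. First I would record the $\mathfrak{osp}$-analogues of Theorem~\ref{thm:main}. Exactly as in Theorem~\ref{thm:main}(\ref{tre}), the weight part functor gives equivalences of symmetric monoidal categories $\mathbf{T}_{\mathfrak{o}(V)}\xrightarrow{\ \sim\ }\mathbb{T}_{\mathfrak{o}(\infty)}$ and $\mathbf{T}_{\mathfrak{sp}(V)}\xrightarrow{\ \sim\ }\mathbb{T}_{\mathfrak{sp}(\infty)}$, while the equivalence $\mathbf{T}_{\mathfrak{osp}(V\oplus\Pi V)}\xrightarrow{\ \sim\ }\mathbb{T}_{\mathfrak{osp}(\infty|\infty)}$ was already noted above (its proof is the proof of Theorem~\ref{thm:main} with $V$ replaced by $V\oplus\Pi V$, the form $A$ or $B$ replaced by $AB$, and superspaces used throughout, the grading affecting only signs in the relevant permutations and contractions). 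The category $\widehat{\mathbf{T}}_{\mathfrak{osp}(V\oplus\Pi V)}$ is defined in the evident way, using $(V\oplus\Pi V)^{\widehat{\otimes}^! p}$ in place of $(V\oplus\Pi V)^{\widehat{\otimes}^\ast p}$, and the counterpart of Theorem~\ref{thm:main}(\ref{due}) holds: by Proposition~\ref{prop:IP}(v), and since a continuous linear map is $\mathfrak{osp}(V\oplus\Pi V)$-equivariant if and only if its continuous dual is, taking continuous duals is an anti-equivalence of monoidal categories between $\mathbf{T}_{\mathfrak{osp}(V\oplus\Pi V)}$ and $\widehat{\mathbf{T}}_{\mathfrak{osp}(V\oplus\Pi V)}$, and likewise between $\mathbf{T}_{\mathfrak{o}(V)}$ and $\widehat{\mathbf{T}}_{\mathfrak{o}(V)}$ and between $\mathbf{T}_{\mathfrak{sp}(V)}$ and $\widehat{\mathbf{T}}_{\mathfrak{sp}(V)}$.

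Next I would check that the weight part functor transports diagram~(\ref{eq5}) to diagram~(\ref{eq4}), i.e. that it intertwines the invariants functors, precisely as it transported diagram~(\ref{eq1}) to diagram~(\ref{eq3}). For an object $W$ of $\mathbf{T}_{\mathfrak{osp}(V\oplus\Pi V)}$, the density of $\mathfrak{sp}(\infty)$ in $\mathfrak{sp}(V)$ and the continuity of the action give $W^{\mathfrak{sp}(V)}=W^{\mathfrak{sp}(\infty)}$; writing $W\cong Z/Q$ with $Z,Q$ closed $\mathfrak{osp}(V\oplus\Pi V)$-stable (hence $\mathfrak{h}_{\mathfrak{g}(V)}$-stable) subspaces of a finite direct sum of spaces $(V\oplus\Pi V)^{\widehat{\otimes}^\ast p}$, the $\mathfrak{h}_{\mathfrak{g}(V)}$-equivariant splitting of Lemma~\ref{T-split} together with the finite-dimensionality of the weight spaces (Lemma~\ref{lem: weightpart}) lets one compute the invariants compatibly with the weight decomposition, so that $(i_{\mathfrak{sp}(V)}W)^{wt}\cong i_{\mathfrak{sp}(\infty)}(W^{wt})$ naturally in $W$, and symmetrically $(i_{\mathfrak{o}(V)}W)^{wt}\cong i_{\mathfrak{o}(\infty)}(W^{wt})$.

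Granting this, diagram~(\ref{eq5}) follows: since Serganova's theorem says the functors $i_{\mathfrak{o}(\infty)},i_{\mathfrak{sp}(\infty)}$ of diagram~(\ref{eq4}) are equivalences of monoidal categories and the three instances of $(\phantom{a})^{wt}$ are such equivalences by the discussion above, the functors $i_{\mathfrak{o}(V)},i_{\mathfrak{sp}(V)}$ of diagram~(\ref{eq5}) are equivalences of monoidal categories; in particular $i_{\mathfrak{o}(V)}\circ i_{\mathfrak{sp}(V)}^{-1}\colon\mathbf{T}_{\mathfrak{sp}(V)}\to\mathbf{T}_{\mathfrak{o}(V)}$ is an equivalence of monoidal categories. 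Diagram~(\ref{eq6}) is then obtained by applying the continuous-dual anti-equivalences recorded above to diagram~(\ref{eq5}): these intertwine the invariants functors (a morphism is $\mathfrak{o}(V)$- or $\mathfrak{sp}(V)$-equivariant iff its dual is) and are anti-monoidal, so the induced $\widehat{i}_{\mathfrak{o}(V)},\widehat{i}_{\mathfrak{sp}(V)}$ are equivalences of monoidal categories.

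The hard part will be the compatibility claimed above — that $(\phantom{a})^{wt}$ genuinely intertwines the invariants functors. Since the functor of invariants is only left exact, on a subquotient $Z/Q$ one cannot commute it with $(\phantom{a})^{wt}$ on formal grounds; one must use the $\mathfrak{h}_{\mathfrak{g}(V)}$-equivariant splittings of Lemma~\ref{T-split} and the finite-dimensionality of the weight spaces of the ambient tensor supermodules (Lemma~\ref{lem: weightpart}), together with the density of $\mathfrak{o}(\infty)$ in $\mathfrak{o}(V)$ and of $\mathfrak{sp}(\infty)$ in $\mathfrak{sp}(V)$, exactly as in the even case where one shows that the weight functor carries diagram~(\ref{eq1}) to diagram~(\ref{eq3}). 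The other ingredients — that the super tensor categories are monoidal abelian, that $(\phantom{a})^{wt}$ is a monoidal equivalence in the $\mathfrak{osp}$ case, and that continuous duality is anti-monoidal — are direct transcriptions of the even-case arguments already carried out above.
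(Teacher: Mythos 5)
Your proposal follows the same route as the paper: the paper leaves this theorem's proof to the reader precisely because it is the $\mathfrak{osp}$/$\mathfrak{o}$/$\mathfrak{sp}$ transcription of the proof of Theorem~\ref{thm:main2} — transport diagram~(\ref{eq5}) along the weight part equivalences (which commute with the invariants functors), invoke Serganova's equivalences in diagram~(\ref{eq4}), and obtain diagram~(\ref{eq6}) by the continuous-dual anti-equivalence. Your additional care about why $(\phantom{a})^{wt}$ intertwines the invariants functors (splittings from Lemma~\ref{T-split}, finite-dimensional weight spaces, density of $\mathfrak{o}(\infty)$ and $\mathfrak{sp}(\infty)$) is a sound filling-in of a step the paper asserts without detail, not a departure from its argument.
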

\begin{flushright}
$\Box$
\end{flushright}

We complete this paper by a brief discussion of universality properties of all our categories. Recall first, that the monoidal category $\mathbb{T}_{\mathfrak{gl}(\infty)}$ admits a canonical left exact functor into any $\mathbb{C}$-linear symmetric abelian monoidal category with two objects $X,Y$ endowed with a morphism $X\otimes Y \rightarrow \mathbf{1}$, where $\mathbf{1}$ denotes the monoidal unit. This functor sends $U$ to $X$, $U_\ast$ to $Y$, and the pairing $U\otimes U_\ast \rightarrow \mathbb{C}$ to the morphism $X\otimes Y \rightarrow \mathbf{1}$. The existence of such a functor has been established in \cite{SS} and \cite{DPS}. Therefore, any category which is equivalent to $\mathbb{T}_{\mathfrak{gl}(\infty)}$ as a monoidal category has the same universality property. According to Theorems~\ref{thm:main} and \ref{thm:main2}, this applies to the categories 
$\mathbf{T}_{\mathfrak{gl}(U)}$, $\mathbf{T}_{\mathfrak{gl}(V)}$, $\mathbf{T}_{\mathfrak{gl}(U\oplus \Pi U)}$ and $\mathbf{T}_{\mathfrak{gl}(V\oplus \Pi V)}$.

Since the category $\widehat{\mathbf{T}}_{\mathfrak{gl}(U)}$ is anti-equivalent to the category $\mathbf{T}_{\mathfrak{gl}(U)}$ \cite{EP}, there is a canonical right-exact functor from $\widehat{\mathbf{T}}_{\mathfrak{gl}(U)}$ to any $\mathbb{C}$-linear symmetric monoidal category with two objects $X,Y$ and a morphism $\mathbf{1}\rightarrow X\otimes Y$.
This functor sends $U$ to $X$, $U^\ast$ to $Y$, and the injection $\mathbf{1}\rightarrow U\widehat{\otimes}^! U^\ast$ to the morphism $\mathbf{1}\rightarrow X\otimes Y$.
Therefore this universality property applies to both monoidal categories $\widehat{\mathbf{T}}_{\mathfrak{gl}(V)}$ and 
$\widehat{\mathbf{T}}_{\mathfrak{gl}(V\oplus \Pi V)}$ introduced in this paper.

Finally, we recall that the equivalent monoidal categories $\mathbb{T}_{\mathfrak{o}(\infty)}$ and $\mathbb{T}_{\mathfrak{sp}(\infty)}$ share the following universality property: each of them admits a canonical left-exact functor to any given $\mathbb{C}$-linear symmetric monoidal category with an object $X$ endowed with a morphism $X\otimes X \rightarrow \mathbf{1}$,
sending $V$, or respectively $V\oplus \Pi V$, to $X$. Since the categories $\mathbf{T}_{\mathfrak{o}(V)}$, $\mathbf{T}_{\mathfrak{sp}(V)}$ and $\mathbf{T}_{\mathfrak{osp}(V\oplus \Pi V)}$ are equivalent as monoidal categories to $\mathbb{T}_{\mathfrak{o}(\infty)}$ and $\mathbb{T}_{\mathfrak{sp}(\infty)}$, they share this universality property. 
In turn, the categories $\widehat{\mathbf{T}}_{\mathfrak{o}(V)}$. $\widehat{\mathbf{T}}_{\mathfrak{sp}(V)}$ and $\widehat{\mathbf{T}}_{\mathfrak{osp}(V \oplus \Pi V)}$ share the following universality property: each of them admits a canonical right-exact functor into any given $\mathbb{C}$-linear symmetric monoidal category with an object $X$ endowed with a morphism $ \mathbf{1} \rightarrow X\otimes X $, sending $V$, or respectively $V\oplus \Pi V$, to $X$.


\begin{thebibliography}{10}

\bibitem{ADCK}
E. Arbarello, C. De Concini,   V. G. Kac.
\newblock The infinite wedge representation and the reciprocity law for algebraic curves. 
\newblock Theta functions—Bowdoin 1987, Part 1 (Brunswick, ME, 1987), 171–190, Proc. Sympos. Pure Math., 49, Part 1, Amer. Math. Soc., Providence, RI, 1989.

\bibitem{Bei}
A. Beilinson.
\newblock Remarks on topological algebras.
\newblock {\em Moscow Mathematical Journal}, 8(1) (2008), 1--20, .

\bibitem{Buh}
T. Bühler.
\newblock Exact categories.
\newblock {\em Expositiones Mathematicae}, 28(1) (2010), 1--69, .



  \bibitem{DJKM}
  E. Date, M. Jimbo, M. Kashiwara, and T. Miwa.
  \newblock Transformation groups for soliton
equations. Nonlinear integrable systems – classical theory and quantum theory (Kyoto,
1981) (M. Jimbo and T. Miwa, eds.), World Scientific, Singapore, 1983, pp. 39–120.

\bibitem{DJM}
E. Date, M. Jimbo, and T. Miwa.
\newblock Solitons, differential equations, symmetries, and
infinite-dimensional algebras. Cambridge Tracts Math., vol. 135, Cambridge Univ.
Press, 2000.

\bibitem{DPS}
E.~{Dan-Cohen}, I.~{Penkov}, and V.~{Serganova}.
\newblock {A Koszul category of representations of finitary Lie algebras}.
\newblock {\em Adv. Math.}  289 (2016), 250--278.



\bibitem{EP}
F. Esposito, I. Penkov.
\newblock Topological tensor representations of $\mathfrak{gl}(V)$ for a space V of countable dimension.
\newblock \emph{Algebraic Geometry and Physics}, to appear;
\newblock arXiv:2206.00654.

\bibitem{FKRW}
E. Frenkel, V. G. Kac, A. Radul, and W. Wang.
\newblock $\mathcal{W}_{1+\infty}$ and $\mathcal{W}(\mathfrak{gl}(N))$ with central charge $N$.
\newblock \emph{Commun. Math. Phys.} 170 (1995), 337–357.


\bibitem{KR}
V. G. Kac and A. Radul. 
\newblock Representation theory of the vertex algebra $\mathcal{W}_{1+\infty}$.
\emph{Transform. Groups} 1 (1996), 41–70.


  



\bibitem{Po}
L. Positselski.
\newblock Exact categories of topological vector spaces with linear topology.
\newblock 2021 ;
\newblock arxiv:2012.15431.


\bibitem{Sch}
J. P. Schneiders.
\newblock Quasi-abelian categories and sheaves.
Memoires de la Societe mathematique de France : supplement au \newblock {\em Bulletin de la Societe mathematique de France} 76 (1999), 1--134.

\bibitem{SS}
S. V. Sam and A. Snowden. 
Stability patterns in representation theory. 
\newblock \emph{Forum Math. Sigma}, 3:e11, 108, 2015.

\bibitem{S}
V. Serganova, Representations of Lie Superalgebras, Lecture notes in Perspectives in Lie Theory, Ed.
F. Callegaro, G. Carnovale, F. Caselli, C. De Concini, A. De Sole, Springer, 2017, 125-177.

\bibitem{T}
J. Tate.
\newblock Residues of differentials on curves.
\newblock \emph{Ann. Sci. École Norm. Sup.} (4) 1 (1968), 149–159. 




\end{thebibliography}
\end{document}